\newtheorem{theo}{Theorem}[section]
\newtheorem{prop}[theo]{Proposition}
\theoremstyle{definition}
\theoremstyle{plain}
\newtheorem{lemma}[theo]{Lemma}
\newtheorem{theorem}[theo]{Theorem}
\newtheorem{proposition}[theo]{Proposition}
\theoremstyle{definition}
\newtheorem{remark}[theo]{Remark}
\newcommand{\beq}{\begin{equation}}
\newcommand{\eeq}{\end{equation}}
\renewcommand{\a}{\alpha}
\renewcommand{\b}{\beta}
\renewcommand{\d}{\delta}
\newcommand{\e}{\epsilon}
\newcommand{\g}{\gamma}
\renewcommand{\l}{\lambda}
\renewcommand{\o}{\omega}
\newcommand{\s}{\sigma}
\newcommand{\G}{\rm{G}}
\newcommand{\M}{\rm{M}}
\newcommand{\J}{\rm{J}}
\newcommand{\gf}{\mathfrak{f}}
\renewcommand{\gg}{\mathfrak{g}}
\newcommand{\gh}{\mathfrak{h}}
\newcommand{\gk}{\mathfrak{k}}
\newcommand{\gm}{\mathfrak{m}}
\newcommand{\gp}{\mathfrak{p}}
\newcommand{\gq}{\mathfrak{q}}
\newcommand{\gt}{\mathfrak{t}}
\newcommand{\gu}{\mathfrak{u}}
\newcommand{\so}{\mathfrak{so}}
\newcommand{\su}{\mathfrak{su}}
\newcommand{\gsp}{\mathfrak{sp}}
\newcommand{\gge}{\mathfrak{e}}
\newcommand\SL{\mathrm{SL}}
\newcommand\K{\mathrm{K}}
\newcommand\U{\mathrm{U}}
\renewcommand\S{\mathrm{S}}
\newcommand\T{\mathrm{T}}
\renewcommand\sp{\mathfrak{sp}}
\renewcommand\sl{\mathfrak{sl}}
\renewcommand{\square}{\kern1pt\vbox
{\hrule height 0.6pt\hbox{\vrule width 0.6pt\hskip 3pt
\vbox{\vskip 6pt}\hskip 3pt\vrule width 0.6pt}\hrule height0.6pt}\kern1pt}
\DeclareMathOperator\End{End\;}
\DeclareMathOperator\Ad{Ad}
\DeclareMathOperator\ad{ad}
\newcommand{\n}{\nabla}
\newcommand{\be}{\begin{equation}}
\newcommand{\ee}{\end{equation}}
\def\<#1,#2>{\langle\,#1,\,#2\,\rangle}
\newcommand{\arr}{\begin{array}{rlll}}
\newcommand{\ea}{\end{array}}
\newcommand{\bea}{\begin{eqnarray}}
\newcommand{\eea}{\end{eqnarray}}
\newcommand{\bean}{\begin{eqnarray*}}
\newcommand{\eean}{\end{eqnarray*}}
\def\sideremark#1{\ifvmode\leavevmode\fi\vadjust{
\vbox to0pt{\hbox to 0pt{\hskip\hsize\hskip1em
\vbox{\hsize3cm\tiny\raggedright\pretolerance10000
\noindent #1\hfill}\hss}\vbox to8pt{\vfil}\vss}}}
\newcounter{ssig}
\newcounter{ttig}
\title[Real semisimple Lie groups and balanced metrics] {Real semisimple Lie groups and balanced metrics}
\author{Federico Giusti}
\address{Department of Mathematics, Aarhus University, Ny Munkegade 118, 8000 Aarhus C, Denmark}
\email{federico.giusti@math.au.dk}
\author{Fabio Podest\`a }
\address{Dipartimento di Matematica e Informatica "Ulisse Dini", Universit\`a di Firenze, V.le Morgagni 67/A, 50100 Firenze, Italy}
\email{fabio.podesta@unifi.it}
\date{\today}
\subjclass[2010]{53C25, 53C21}
\keywords{Hermitian manifolds, semisimple Lie algebras, balanced metrics.}
\begin{document}

\begin{abstract} Given any non-compact real simple Lie group $\G_o$ of inner type and even dimension, we prove the existence of an invariant complex structure $\J$ and a Hermitian balanced metric with vanishing Chern scalar curvature on $\G_o$ and on any compact quotient $\M= \G_o/\Gamma$, with $\Gamma$ a cocompact lattice. We also prove that $(\M,\J)$ does not carry any pluriclosed metric, in contrast to the case of even dimensional compact Lie groups, which admit pluriclosed but not balanced metrics.   \end{abstract}

\maketitle 
\section{Introduction} 
Given a complex non-K\"ahler $n$-dimensional manifold $(\M,\J)$ it is a natural and meaningful problem to find special Hermitian metrics which might help in understanding the geometry of $\M$. Great effort has been spent in the last decades in this research topic and among special metrics the pluriclosed and the balanced conditions have shown to be highly significant.\par 
The balanced condition can be defined saying that the fundamental form $\o=h(\cdot,\J\cdot)$ of a Hermitian metric $h$ satisfies the non-linear condition $d\o^{n-1}=0$ or equivalently, $-\J \theta = \d\o=0$, where $\d$ denotes the codifferential and $\theta$ the torsion $1$-form (see e.g. \cite{Ga2}).
While this concept appears in \cite{Ga1} under the name of semi-K\"ahler (see also \cite{Gra}), in \cite{Mi} the balanced condition was started to be thoroughly investigated, highlighting also the duality with the K\"ahler condition and establishing necessary and sufficient conditions for the existence of these metrics in terms of currents. While K\"ahler metrics are obviously balanced and share with these the important relation among Laplacians $\Delta_{\partial}=\Delta_{\overline\partial}=\frac 12 \Delta$ (see \cite{Ga1}), there are many examples of non-K\"ahler manifolds carrying balanced metrics. Basic examples are given by compact complex parallelizable manifolds, which are covered by complex unimodular Lie groups $\G$ and every left invariant Hermitian metric turns out to be balanced (see \cite{AG}\cite{Ga1}\cite{Gr}). Further examples of balanced metrics are provided by any Hermitian invariant metric on a compact homogeneous flag manifold (see also \cite{FGV} for a characterization of compact homogeneous complex manifolds carrying balanced metrics) as well as by twistor spaces of certain self-dual $4$-manifolds (\cite{Mi}) and more generally (\cite{To}) by twistor spaces of compact hypercomplex manifolds (see also \cite{Fo} for other examples on toric bundles over hyperk\"ahler manifolds). Contrary to the K\"ahlerness condition, being balanced is a birational invariant (see \cite{AB1}, so that e.g. Moishezon manifolds are balanced) and compact complex manifolds $X$ which can be realized as the base of a holomorphic proper submersion $f:Y\to X$ inherit the balanced condition whenever $Y$ has it (\cite{Mi}), while the balanced property is not stable under small deformations of the complex structure (see \cite{AB2},\cite{FuY}, \cite{AU}). On the other hand, the balanced condition is obstructed, as on compact manifolds with balanced metrics no compact complex hypersurface is homologically trivial, so that for instance Calabi-Eckmann manifolds do not carry balanced metrics. This is in contrast with the fact that Gauduchon metrics, which statisfy the weaker condition  $\partial\bar\partial\o^{n-1}=0$, always exist on a compact complex manifold.  \par 
In more recent years, the rising interest in the Strominger System (see \cite{GF} and \cite{FeY} for the case of invariant solutions on complex Lie groups) has given balanced metrics a really central role in non-K\"ahler geometry, as the equivalence between the dilatino equation (i.e. one of the equations of the system) and the conformally balanced equation requires the solutions of the system to be necessarily balanced. We refer also to the work \cite{FLY}, where new examples of balanced metrics are constructed on some Calabi Yau non-K\"ahler threefolds, as well as to the results in \cite{BV}, where a new balanced flow is introduced and investigated.  \par 
The main goal of this paper is to search for invariant special Hermitian, in particular balanced, metrics in the class of semisimple real non-compact Lie groups and on their compact (non-K\"ahler) quotients by a cocompact lattice; actually it appears that, despite invariant complex structures on semisimple (reductive) Lie algebras being fully classified in \cite{Sn} (after the special case of compact Lie algebras had been considered by Samelson (\cite{Sam}) and later in \cite{Pi}), they have never been deeply investigated from this point of view. In contrast, the case of $\K$ compact is fully understood, as in such a case it is very well known that every invariant complex structure can be deformed to an invariant one for which the opposite of the Cartan-Killing form is a pluriclosed Hermitian metric $h$, i.e. it satisfies $dd^c\o_h=0$. Moreover it has been proved in \cite{FGV} that $\K$ does not carry {\it any} balanced metric at all, fueling the conjecture (\cite{FV}) that a compact complex manifold carrying two Hermitian metrics, one balanced and the other pluriclosed, must be actually K\"ahler.\par
More specifically, in this work we focus on a large class of simple non-compact real Lie algebras $\gg_o$ of even dimension, namely those which are of inner type, i.e. when the maximal compactly embedded subalgebra $\gk$ in a Cartan decomposition of $\gg_o$ contains a Cartan subalgebra. In these algebras we construct standard invariant complex structures (regular in \cite{Sn}) and write down the balanced condition for invariant Hermitian metrics. A careful analysis of the resulting equation together with some general argument on root systems allows us to show the existence of a suitable invariant complex structure and a corresponding Hermitian metric satisfying the balanced equation. By Borel's Theorem, every semisimple Lie group $\G_o$ admits a cocompact lattice $\Gamma$ so that the compact quotient $\G_o/\Gamma$ inherits the invariant balanced structure from $\G_o$. We note here that the resulting metrics come in families and moreover the same kind of arguments can be applied to show the existence of balanced structures on quotients $\G_o/\S$, where $\G_o$ is any simple non-compact Lie group of inner type of any dimension and $\S$ is a suitable abelian closed subgroup.\par 
We are also able to prove that the compact quotients $\M=\G_o/\Gamma$, endowed with the invariant complex structure that allows the existence of balanced metrics, do not carry any pluriclosed metric. This result is in accordance with the conjecture by Fino and Vezzoni and in some sense reflects a kind of duality between the compact and non-compact case, switching the existence of balanced/pluriclosed Hermitian metrics. In the last section, we prove that these balanced manifolds $\M$, despite having vanishing first Chern class, carry no non trivial holomorphic $(n,0)$-forms; furthermore  we prove that they have vanishing Chern scalar curvature. This last property may allow to better understand the geometry of these manifolds, according to some more recent results concerning the implications of vanishing Chern-scalar curvature on some geometric features (see \cite{Y}). \par
The paper is structured as follows. In Section 2, we review basic facts on simple real non-compact Lie algebras with invariant complex structures and we consider a class of invariant Hermitian metrics for which we write down the balanced condition in terms of roots. In section 3 we state our main result, namely Theorem \ref{main}, and we prove it by means of several steps. We first rewrite the balanced equation in terms of simple roots and then the key Lemma \ref{L1} allows us to select an invariant complex structure so that the relative balanced equation admits solutions. In section 4 we prove that the complex manifolds that we constructed in the previous section and that admit balanced metrics, do not carry {\it any} pluriclosed metric. In the last section, we show in Theorem 5.1 that these complex compact manifolds $(\M,\J)$ have trivial first Chern class and that the balanced metrics we have constructed have vanishing Chern scalar curvature; as a consequence we show that the Kodaira dimension $\kappa(\M)=-\infty$.

\par 
\vspace{0.5cm}
{\bf Aknowledgements.} The second author was supported by GNSAGA of INdAM and by the project PRIN 2017 ``Real and Complex Manifolds: Topology, Geometry and Holomorphic Dynamics'', n. 2017JZ2SW5.\par 
The authors would like to thank Daniele Angella for valuable conversations.  

\section{Preliminaries}

Let $\gg_o$ be a real simple $2n$-dimensional Lie algebra. It is well known that either the complexification $\gg_o^c$ is a complex simple Lie algebra (and in this case $\gg_o$ is called absolutely simple) or $\gg_o$ is the realification $\gg_\mathbb R$ of a complex simple Lie algebra $\gg$ (see e.g. \cite{He}). \par 
When $\gg_o$ is even dimensional, it is known (\cite{Mo}, see also \cite{Sas}) that $\gg_o$ admits an invariant complex structure, namely an endomorphism $\J\in \End(\gg_o)$ with $\J^2=-\rm{Id}$ and vanishing Nijenhuis tensor or, equivalently, such that 
$$\gg_o^c = \gg_o^{10}\oplus \gg_o^{01},\qquad [\gg_o^{10},\gg_o^{10}]\subseteq \gg_o^{10}.$$
If $\G_o$ is any Lie group with Lie algebra $\gg_o$, then the endomorphism $\J$ defines a (left)-invariant complex structure on $\G_o$. Moreover, thanks to a result due to Borel (\cite{Bo}), there exists a discrete, torsionfree cocompact lattice $\Gamma$ so that $\M:= \G_o/\Gamma$ is compact and the left-invariant complex structure $\J$ on 
$\G_o$ descends to a complex structure $\J$ on $\M$.\par 
We recall that when $\G_o$ is compact and even-dimensional, i.e. $\gg_o$ is of compact type, the existence of an invariant complex structure was already established by Samelson (\cite{Sam}), while in \cite{Pi} it was shown that every invariant complex structure on $\G_o$ is obtained by means of Samelson's construction. \par
If we now consider an even-dimensional $\G_o$ and a compact quotient $\M$ endowed with an invariant complex structure $\J$, we are interested in the existence of special Hermitian metrics $h$. The following proposition states a known fact, namely the non-existence of (invariant) K\"ahler structures.
\begin{prop}\label{invK} The group $\G_o$ does not admit any invariant K\"ahler metric and the compact quotient $\M=\G_o/\Gamma$ is not K\"ahler.
\end{prop}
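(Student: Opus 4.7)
The plan is to prove the stronger statement---that the compact quotient $\M$ admits no K\"ahler metric at all---since the non-existence of an invariant K\"ahler metric on $\G_o$ follows immediately (such a metric would descend to $\M$). The strategy has two movements: first, reduce any hypothetical K\"ahler form on $\M$ to an \emph{invariant} K\"ahler form by an averaging procedure; second, exclude invariant K\"ahler forms by means of Whitehead's second lemma together with Stokes' theorem on $\M$.

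For the reduction step, let $\omega$ be a K\"ahler form on $\M$. For each $X\in\gg_o$ the left-invariant vector field $X^L$ on $\G_o$ descends to a well-defined vector field $\hat X$ on $\M$, because $\Gamma$ acts by left translations and $L_\gamma^* X^L = X^L$. Define a skew-symmetric bilinear form $\bar\omega\in\Lambda^2\gg_o^*$ by
\[
\bar\omega(X,Y)\ :=\ \int_\M \omega(\hat X,\hat Y)\,d\mu,
\]
where $\mu$ is the probability measure on $\M$ induced by Haar measure on $\G_o$. Using $d\omega = 0$ together with the fact that the flow of $\hat X$ is right translation by $\exp(tX)$---which preserves $\mu$ because the semisimple group $\G_o$ is unimodular---one verifies via the Koszul formula that $\bar\omega$ is Chevalley--Eilenberg closed. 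The left-invariance of $\J$ gives $\widehat{\J X}=\J\hat X$, so $\bar\omega$ is of type $(1,1)$, and pointwise positivity of $\omega$ yields $\bar\omega(X,\J X)\neq 0$ with constant sign for $X\neq 0$. Hence $\bar\omega$ determines an invariant K\"ahler form on $\G_o$, which descends to a still invariant K\"ahler form on $\M$.

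For the exclusion step, semisimplicity of $\gg_o$ yields $H^2(\gg_o,\mathbb R) = 0$ by Whitehead's second lemma, so $\bar\omega = d\xi$ for some $\xi\in\gg_o^*$; this identity is inherited on $\M$ since both sides are left-invariant. Therefore $\bar\omega^n = d(\xi\wedge\bar\omega^{n-1})$ is exact on the compact manifold $\M$, which forces $\int_\M \bar\omega^n = 0$ by Stokes. This contradicts the K\"ahler property of $\bar\omega$, since $\bar\omega^n$ is a nowhere-vanishing top-degree form with nonzero integral. I expect the most delicate point to be the verification that the averaged form $\bar\omega$ genuinely inherits K\"ahler positivity---an observation that ultimately rests on the left-invariance of $\J$ and commutes averaging with the $(1,1)$ condition; once this is secured the rest is a routine combination of standard facts.
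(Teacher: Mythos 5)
Your proof is correct, and while your averaging step coincides with the paper's (the same Belgun-type symmetrization: integrate $\omega$ against the projected left-invariant frame, kill the derivative terms in the Koszul formula by unimodularity and Stokes, and check that positivity and the $(1,1)$ condition survive), your exclusion step takes a genuinely different route. The paper rules out an invariant symplectic form directly at the Lie algebra level: writing $\o(x,y)=B(Fx,y)$ with $B$ the Cartan--Killing form, the cocycle condition makes $F$ a derivation, hence $F=\ad(z)$ by semisimplicity, and then $z\in\ker\o$ contradicts nondegeneracy. You instead invoke Whitehead's second lemma to write $\bar\o=d\xi$ with $\xi$ invariant, so that $\bar\o^n=d(\xi\wedge\bar\o^{n-1})$ is exact and $\int_\M\bar\o^n=0$ contradicts positivity. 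The two arguments are close cousins (the paper's computation is essentially a hands-on proof of the relevant vanishing), but they buy different things: the paper's version is purely algebraic, needs no compact quotient, and actually shows that $\gg_o$ carries no invariant symplectic structure at all, which is why the paper can state the non-existence on $\G_o$ independently of any lattice; your version needs the compact quotient (hence Borel's theorem) even for the first assertion, since you derive it from the second, but in exchange it is a one-line appeal to a standard cohomological fact and avoids the Killing-form manipulation. Your explicit verification that the averaged form remains positive and of type $(1,1)$ is a point the paper passes over quickly, and it is worth making.
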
 
\begin{proof} The first assertion is contained in \cite{Ch}, but we give here an elementary proof. If $\o$ is an invariant symplectic form 
on $\gg_o$, then the closedness condition $d\o=0$ can be written as follows for $x,y,z\in\gg_o$  
\beq\label{closed}\o([x,y],z)+\o([z,x],y) + \o([y,z],x)=0.\eeq
If $B$ denotes the non-degenerate Cartan-Killing form of $\gg_o$, then we can define the endomorphism $F\in\End(\gg_o)$ by $B(Fx,y)=\o(x,y)$ ($x,y\in\gg_o$) so that $F$ turns out to be a derivation by \eqref{closed}. As $\gg_o$ is semisimple, there exists a unique $z\in \gg_o$ with $F=\ad(z)$, so that $z\in \ker\o$, a  contradiction.  \par
We now suppose that the compact manifold $\M$ has a K\"ahler metric with K\"ahler form $\o$. Using $\o$ and a symmetrization procedure that goes back to \cite{Be}, we now construct an {\it invariant} K\"ahler form on $\G_o$, obtaining a contradiction. We fix a basis $x_1,...,x_{2n}$ of $\gg_o$ and we extend each vector as a left invariant vector fields on $G_o$; these vector fields can be  projected down to $M$ as vector fields $x_1^*,\ldots,x_{2n}^*$ that span the tangent space $TM$ at each point. As $\G_o$ is semisimple, we can find a biinvariant volume form $d\mu$, that also descends to a volume form on $\M$. We now define a left-invariant non-degenerate $2$-form $\phi$ on $G_o$ by setting
$$\phi_e(x_{i},x_j) := 
\int_M \o(x_{i}^*,x_{j}^*)\ d\mu.$$
As $\mathcal L_{x_k^*}d\mu = 0$ for every $k$, we have for every $i,j,k=1,\ldots,2n$
$$\int_M x_k^*\o(x_{i}^*,x_{j}^*)\ d\mu = \int_M \mathcal L_{x_k^*}( \o(x_{i}^*,x_{j}^*)\ d\mu) = 0$$ by Stokes' theorem and therefore we obtain that 
$$d\phi(x_{i},x_j,x_k) = 
\int_M d\o(x_{i}^*,x_j^*,x_{k}^*)\ d\mu = 0.$$
This implies that $\phi$ is a symplectic form and the proof is concluded.\end{proof}
Therefore we are interested in the existence of special Hermitian metrics on the complex manifold ($\M,\J$), in particular balanced and pluriclosed metrics, when the group $\G_o$ is of non-compact type. \par 
The case of a simple Lie algebra $\gg_o$ which is the realification of a complex simple Lie algebra $\gg$ can be easily treated and will be dealt with in subsection 2.3.\par
We will now focus on some subclasses of simple real algebras, namely those which are absolutely simple and of inner type. \par
\subsection{Simple Lie algebras of inner type}\label{simple} Let $\gg_o$ be an absolutely simple real algebra of non-compact type. It is well-known that $\gg_o$ admits a Cartan decompositon 
$$\gg_o = \gk + \gp,$$
where $\gk$ is a maximal compactly embedded subalgebra and 
$$[\gk,\gp]\subseteq \gp,\quad [\gp,\gp]\subseteq \gk,$$
so that $(\gg_o,\gk)$ is a symmetric pair. Moreover the algebra $\gg_o$ is said to be {\it of inner type} when the symmetric pair $(\gg_o,\gk)$ is of inner type, i.e. when a Cartan subalgebra $\gt$ of $\gk$ is a Cartan subalgebra of $\gg_o$, i.e. its complexification $\gt^c$ is a Cartan subalgebra of $\gg_o^c$. Using the notation as in \cite{He}, p.~126, we obtain the list of all inner symmetric pairs $(\gg_o,\gk)$ of non-compact type with $\gg_o$ simple and even dimensional (Table 1).  \par
\begin{table}[ht]\label{T1}
	\centering
	\renewcommand\arraystretch{1.1}
	\begin{tabular}{|c|c|c|c|}
		\hline
		{\mbox{Type}}				& 	$\gg$					&	$\gk$	& 	{\mbox{conditions}}			 	\\ \hline \hline
	$A$ &	$\su(p,q)$ & $\su(p) + \su(q) +\mathbb R$ & $p\geq q\geq 1$, \ $p+q\ {\rm{odd}}$				\\ \hline
	$B$ & $\so(2p+1,2q)$ & $\so(2p+1) + \so(2q)$ & $p\geq 0,q\geq 1$, \ $p+q\ {\rm{even}}$			\\ \hline
	$C$ & $\gsp(2n,\mathbb R)$ & $\su(2n)+\mathbb R$ & $n\geq 1$			\\ \hline
	$C$ & $\gsp(p,q)$ & $\gsp(p) + \gsp(q)$ & $p,q\geq 1$, \ $p+q\ {\rm{even}}$			\\ \hline
	$D$ & $\so(4n)^*$ & $\su(2n)+\mathbb R$ & $n\geq 2$			\\ \hline
	$D$ & $\so(2p,2q)$ & $\so(2p) + \so(2q)$ & $p,q\geq 1,p+q\ \rm{even}\ \geq 4$			\\ \hline
	$G$ & $\gg_{2(2)}$ & $\su(2)+\su(2)$ & 			\\ \hline
	$F$ & $\gf_{4(-20)}$ & $\so(9)$ & 			\\ \hline
	$F$ & $\gf_{4(4)}$ & $\su(2)+\gsp(3)$ & 			\\ \hline
	$E$ & $\gge_{6(2)}$ & $\su(2)+\su(6)$ & 			\\ \hline
	$E$ & $\gge_{6(-14)}$ & $\so(10)+\mathbb R$ & 			\\ \hline
	$E$ & $\gge_{8(8)}$ & $\so(16)$ & 			\\ \hline
	$E$ & $\gge_{8(-24)}$ & $\su(2)+\gge_7$ & 			\\ \hline
	\end{tabular}
	\vspace{0.1cm}
	\caption{Inner symmetric pairs $(\gg,\gk)$ of non-compact type with $\gg$ simple and even dimensional.}\label{table}
\end{table}

\subsection{Invariant complex structures} In this section we will describe how to construct invariant complex structures on even-dimensional absolutely simple non-compact Lie algebras $\gg_o$. \par
We fix a maximal abelian subalgebra $\gt\subseteq \gk$, so that $\gh:= \gt^c$ is a Cartan subalgebra of $\gg:= \gg_o^c$. Note that if $\gg_o$ is even dimensional , the same holds for $\gt$. The corresponding root system is denoted by $R$ and we have the following decompositions 
$$\gk^c = \gt^c \oplus \bigoplus_{\a\in R_\gk}\gg_\a,\quad \gp^c = \bigoplus_{\a\in R_\gp}\gg_\a,$$
where a root $\a$ will be called {\it compact} (resp. {\it non-compact}), when $\gg_\a\subseteq \gk^c$ (resp. $\gg_\a\subseteq \gp^c$) and the set of all compact (resp. non-compact) roots is denoted by $R_\gk$ (resp. $R_\gp$). It is a standard fact that $\gu:= \gk + i\gp\subseteq \gg$ is a compact real form of $\gg$ and that we can choose a basis $\{E_\a\}_{\a\in R}$ of root spaces so that 
$$\tau(E_\a) = -E_{-\a}, \qquad B(E_\a,E_{-\a}) = 1,\qquad [E_\a,E_{-\a}] = H_\a$$
where $\tau$ denotes the anticomplex involution defining $\gu$, $B$ is the Cartan Killing form of $\gg$ and $H_\a$ is the $B$-dual of $\a$ (see e.g. \cite{He}). If $\s$ is the involutive anticomplex map defining $\gg_o$, we then have that 
$$\s(E_\a) = -E_{-\a},\quad \a\in R_\gk,$$
$$\s(E_\a) = E_{-\a},\quad \a\in R_\gp.$$
If we fix an ordering , namely a splitting $R = R^+\cup R^-$ with $R^-=-R^+$ and $(R^++R^+)\cap R \subseteq R^+$, we can define a subalgebra 
$$\gq := \gh_1 \oplus \bigoplus_{\a\in R^+}\gg_\a,$$
where $\gh_1\subset \gh$ is a subspace so that $\gh_1\oplus \s(\gh_1)= \gh$. The so defined subalgebra $\gq\subset \gg$ satisfies 
$$\gg = \gq \oplus \s(\gq)$$
and therefore it defines a complex structure $\J$ on $\gg_o$ with the property that $\gq = \gg_o^{10}$. This complex structure depends on the arbitrary choice of $\gh_1$, i.e. on the arbitrary choice of a complex structure on $\gt$. \par 
We remark that the complex structure $\J$ enjoys the further property of being $\ad(\gt)$-invariant, namely 
$$[\ad(x),\J] = 0,\quad x\in\gt.$$
Therefore if $\G_o$ is a Lie group with Lie algebra $\gg_o$, then $\J$ extends to a left-invariant complex structure on $\G_o$ and it will be also right-invariant with respect to right translations by elements $h\in {\rm{T}}:= \exp(\gt)$ (note that ${\rm{T}}$ might be non-compact, unless $\G_o$ has finite center). \par
We will call such an invariant complex structure {\it standard}.
\begin{remark} In \cite{Sn} the class of (simple) real Lie algebras of inner type is called ``Class I'' and it is then proved that {\it every} invariant complex structure in these algebras are standard, with respect to a suitable choice of a Cartan subalgebra (such complex structures are called regular in \cite{Sn}).
\end{remark}

\subsection{Invariant metrics and the balanced condition}\label{inv}
Let $\M$ be a compact complex manifold of the form $\G_o/\Gamma$, endowed with a complex structure $\J$ which is induced by a standard invariant complex structure $\J$ on $\G_o$, as in the previous section. It is clear that any left invariant $\J$-Hermitian metric $h$ on $\G_o$ induces an Hermitian metric $\bar h$ on $\M$ and $\bar h$ is balanced or pluriclosed if and only if $h$ is so. For the converse, we prove the following
\begin{prop} If $(\M,\J)$ admits  a balanced (pluriclosed) Hermitian metric, there exists a left invariant and right $\T$-invariant Hermitian metric on $\G_o$ which is balanced (pluriclosed resp.).  \end{prop}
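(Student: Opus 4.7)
The plan is to extend the Belgun-type symmetrization already used in the proof of Proposition~\ref{invK}, treating the pluriclosed case by a direct linear averaging of $\o$ and the balanced case by averaging the $(n-1)$-th power $\o^{n-1}$ and extracting a Hermitian root à la Michelsohn. Fix a basis $x_1,\ldots,x_{2n}$ of $\gg_o$ and let $x_i^*$ be the vector fields on $\M$ induced by the corresponding left-invariant vector fields on $\G_o$; since left and right translations commute, these satisfy $[x_i^*,x_j^*]=[x_i,x_j]^*$, and left-invariance of $\J$ gives $(\J x_i)^*=\J\,x_i^*$. With $d\mu$ the bi-invariant Haar form (available since $\gg_o$ is unimodular), I would introduce the symmetrization map $\mathcal S$ sending a $k$-form $\alpha$ on $\M$ to the left-invariant $k$-form on $\G_o$ whose value at $e$ is
$$\mathcal S(\alpha)(x_{i_1},\ldots,x_{i_k}) \= \int_\M \alpha(x_{i_1}^*,\ldots,x_{i_k}^*)\ d\mu.$$
Exactly as in Proposition~\ref{invK}, the identity $\mathcal L_{x^*}d\mu=0$ and Stokes' theorem give $\mathcal S\circ d=d\circ\mathcal S$; the relation $(\J x)^*=\J x^*$ shows that $\mathcal S$ intertwines the natural $\J$-actions on forms, hence also commutes with $d^c$ and preserves the bidegree.

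In the pluriclosed case I would simply set $\phi\=\mathcal S(\o)$, where $\o$ is the fundamental form of the given metric. Then $\phi$ is a left-invariant real $(1,1)$-form, pointwise positive as an integral average of positive forms, hence the fundamental form of a left-invariant $\J$-Hermitian metric; from $dd^c\o=0$ the intertwining properties yield $dd^c\phi=\mathcal S(dd^c\o)=0$.

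The balanced case is the main obstacle, because the condition $d\o^{n-1}=0$ is nonlinear in $\o$ and a direct average of $\o$ need not be balanced. I would instead symmetrize the top power: set $\Phi\=\mathcal S(\o^{n-1})$, a left-invariant real $(n-1,n-1)$-form on $\G_o$ with $d\Phi=\mathcal S(d\o^{n-1})=0$. Since $\o^{n-1}$ is pointwise strictly positive on $\M$ and the cone of strictly positive $(n-1,n-1)$-forms is convex, $\Phi$ is strictly positive at every point of $\G_o$. The classical pointwise linear-algebra result of Michelsohn \cite{Mi} then produces a unique positive $(1,1)$-form $\tilde\o$ with $\tilde\o^{n-1}=\Phi$; uniqueness of the Hermitian root upgrades left-invariance of $\Phi$ to left-invariance of $\tilde\o$, and $d\tilde\o^{n-1}=d\Phi=0$ says that the Hermitian metric associated with $\tilde\o$ is balanced.

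To promote left-invariance to additional right $\T$-invariance, I would observe that right translation by $t\in\T$ acts on left-invariant forms as the pull-back by $\Ad(t^{-1})$ on $\Lambda^\bullet\gg_o^*$. Since $\gt\subseteq\gk$ is compactly embedded in $\gg_o$, the subgroup $\Ad(\T)\subseteq \Aut(\gg_o)$ has compact closure, and one can average the left-invariant forms $\phi$ respectively $\Phi$ over this compact abelian group. The $\ad(\gt)$-invariance of $\J$ ensures that averaging preserves the bidegree; strict positivity survives by convexity of the positive cone; and closedness of $dd^c\phi$ respectively $d\Phi$ is preserved because $d$ and $d^c$ commute with pullback along Lie algebra automorphisms. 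In the pluriclosed case the averaged $\phi$ is the required metric, while in the balanced case Michelsohn's root applied to the averaged $\Phi$ yields, again by uniqueness, a left- and right-$\T$-invariant balanced Hermitian form.
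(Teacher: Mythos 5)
Your proposal is correct and follows essentially the same route as the paper: symmetrize $\o^{n-1}$ over $\M$ against the bi-invariant volume to get a closed, positive, left-invariant $(n-1,n-1)$-form, extract the unique Michelsohn root (left-invariance following from uniqueness), and then average over the compact group $\Ad(\T)$ to gain right $\T$-invariance. Your treatment of the pluriclosed case by directly symmetrizing $\o$ (using linearity of $dd^c$) is in fact spelled out more carefully than the paper's one-line remark, but it is the same averaging idea.
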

\begin{proof} Suppose we have a balanced metric $h$ on $M$ with associated fundamental form $\o$. Then using the same notation and arguments as in the proof of Prop.\ref{invK}, we define a left-invariant positive $(n-1,n-1)$-form $\phi$ on $G_o$ as follows
$$\phi_e(x_{i_1},\ldots,x_{i_{2n-2}}) := 
\int_M \o^{n-1}(x_{i_1}^*,\ldots,x_{i_{2n-2}}^*)\ d\mu.$$
As $d\o^{n-1}=0$, we obtain that also $d\phi=0$. Therefore, we
can find an unique $(1,1)$-form $\hat \o$ so that $\hat\o^{n-1} = \phi$ (see \cite{Mi}) and the metric given by $\hat \o$ is balanced. As $\phi$ is left invariant, so is $\hat \omega$ by uniqueness. Now, the group $\Ad(\T)$ is compact and using a standard avaraging process we can make $\phi_e$ also $\Ad(\T)$-invariant. This means that $\phi$ is also invariant under right $\T$-translations. Again, by the uniqueness, the same will hold true for $\hat\omega$.\par As for the pluriclosed condition, the lifted metric from $\M$ to $\G_o$ is clearly pluriclosed and can be made $T$-invariant by a standard averaging. \end{proof}
\begin{remark} We can now deal with the case when $\gg_o$ is the realification of a simple Lie algebra $\gg$. In this case the complex structure $\J$ commutes with $\ad(\gg_o)$ and $\gg_o = \gu + i\gu$ is a Cartan decomposition, where $\gu$ is a compact real form of $\gg$. Let $\G_o$ be a real group with algebra $\gg_o$ and let $\U$ be the compact subgroup with algebra $\gu$. Then the metric $h$ which coincides with $-B$ on $\gu$, with $B$ on $i\gu$ and such that $h(\gu,i\gu)=0$ is a Hermitian metric which is balanced. Indeed, $h$ is $\Ad(\U)$-invariant and therefore the corresponding $\d\o$ is  $\Ad(\U)$-invariant $1$-form, hence it vanishes identically. This is consistent with the fact that complex parallelizable manifolds carry balanced metrics as they carry Chern-flat metrics, as noted in  \cite{Ga1}, p. 121 (see also \cite{AG},\cite{Gr}). \par 
On the other hand, $\G_o$ admits no invariant pluriclosed metric. Indeed, any such metric $h$ can be avaraged to produce an $\Ad(\U)$-invariant pluriclosed metric, which would be balanced by the previous argument. This is not possible, as a metric which is balanced and pluriclosed at the same time has to be K\"ahler (see e.g. \cite{AI}), contrary to Prop \ref{invK}.\end{remark}

We now focus on the case where $\gg_o$ is absolutely simple of inner type, endowed with an invariant complex structure.  
We fix a Cartan subalgebra $\gt\subseteq \gk$ with corresponding root system $R=R_\gk \cup R_\gp$ as in section \ref{simple} and we consider an ordering 
$R = R^+\cup R^-$ giving an invariant complex structure $\J_o$ on $\gg_o/\gt$. We extend $\J_o$ to an invariant complex structure $\J$ on $\gg_o$. \par 
We also fix a basis of a complement of $\gt$ in $\gg_o$
$$v_\a := \frac 1{\sqrt 2}(E_\a-E_{-\a}),\ 
w_\a := \frac i{\sqrt 2}(E_\a+E_{-\a}),\ \a\in R_\gk^+,$$
$$v_\a := \frac 1{\sqrt 2}(E_\a+E_{-\a}),\ 
w_\a := \frac i{\sqrt 2}(E_\a-E_{-\a}),\ \a\in R_\gp^+,$$
so that $v_\a,w_\a\in \gg_o$ for every $\a\in R^+$ and moreover 
$$Jv_\a = w_\a, \ Jw_\a = -v_\a,$$
$$[H,v_\a] = -i\a(H)w_\a,\ H\in\gh,$$ 
$$[v_\a,w_\a] = iH_\a,\ \a\in R_\gk^+,$$
$$[v_\a,w_\a] = -iH_\a,\ \a\in R_\gp^+.$$

We now construct invariant Hermitian metrics $h$ on $\gg_o$. First, we define $h$ on $\gt$ by choosing a $J$-Hermitian metric $h_\gt$ on $\gt$. If we set $\gm_\a := {\rm{Span}}\{v_\a,w_\a\}_{\a\in R^+}$, we define for $\a\neq \b\in R^+$
$$h(\gt,\gm_\a)= 0,\quad h(\gm_\a,\gm_\b)= 0,$$
$$h(v_\a,v_\a) = h(w_\a,w_\a) = h_\a^2,\quad h(v_\a,w_\a) = 0$$
for $h_a\in \mathbb R^+$.\par
In particular we are interested in constructing balanced Hermitian metrics, namely Hermitian metrics whose associated $(1,1)$-form $\o=h(\cdot,\J\cdot)$ satisfies $d\o^{n-1}=0$ or equivalently $\d\o=0$, where $\d$ denotes the codifferential. \par 
We use the expression 
$$\d\o (x) = -{\rm{Tr}}\nabla_{\cdot}\o(\cdot,x) = - \sum_{i}^{2n}\n_{e_i}\o(e_i,x) = $$
$$= \sum_i \o(\n_{e_i}e_i,x) + \o(e_i,\n_{e_{i}}x),$$
where $\nabla$ denotes the Levi Civita connection of $h$ and $\{e_i\}$ is an orthonormal basis of $\gg_o$ w.r.t. $h$. Note that both $h$ and $\J$ are $\ad(\gt)$-invariant and therefore $\d\o$, which is also $\ad(\gt)$-invariant, does not vanish only when evaluated on elements $x\in \gt$.\par We have the following expression for the Levi Civita connection, namely for $x,y,z\in \gg_o$ 
$$2h(\n_xy,z) = h([x,y],z) + h([z,x],y) + h([z,y],x).$$
Then for every $x\in\gt$, $y\in \gg_o$ 
$$h(\n_{y}y,x) = h([x,y],y) = 0.$$
Therefore for $x\in\gt$ we have 
\beq\label{comp}\d\o(x) = \sum_i\o(e_i,\n_{e_i}x) = -\sum_ih(Je_i,\n_{e_i}x) = \eeq
$$=-\frac 12\left( h([e_i,x],Je_i)+ 
           h([Je_i,e_i],x) + h([Je_i,x],e_i)\right).$$
We now observe that $J$ is $\ad(\gt)$-invariant and therefore 
$h([Je_i,x],e_i) = -h([e_i,x],Je_i)$ for every $i=1,\ldots,2n$, 
so that \eqref{comp} can be written as 
$$-\d\o(x) = \frac 12 \sum_ih([Je_i,e_i],x)= $$
$$= \frac 12 \cdot 2 \left(\sum_{\a\in R_\gk^+}\frac 1{h_\a^2} h([w_\a,v_\a],x)+ 
\sum_{\a\in R_\gp^+}\frac 1{h_\a^2}h([w_\a,v_\a],x)\right)=$$
$$= \sum_{\a\in R_\gk^+}\frac 1{h_\a^2}h(-iH_\a,x) + 
\sum_{\a\in R_\gp^+}\frac 1{h_\a^2}h(iH_\a,x),$$
so that 
$\d\o|_{\gt} =0$ if and only if 
$$-\sum_{\a\in R_\gk^+}\frac 1{h_\a^2}H_\a + \sum_{\a\in R_\gp^+}\frac 1{h_\a^2}H_\a=0.$$
Summing up, the metric $h$ is balanced when the following equation is satisfied
\beq\label{eq}\sum_{\a\in R_\gk^+}\frac 1{h_\a^2}\a = \sum_{\a\in R_\gp^+}\frac 1{h_\a^2}\a.\eeq\par\medskip
Note that this does {\it not} depend on the choice of the metric along the toral part $\gt$.

\section{Main result}\label{proof}
In this section we will prove our main result
\begin{theorem}\label{main} Every non-compact simple Lie group $\G_o$ of even dimension and of inner type admits an invariant complex structure $\J$ and an invariant balanced $\J$-Hermitian metric.\end{theorem}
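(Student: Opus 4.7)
The plan is to solve the balanced equation \eqref{eq} for positive reals $h_\a$, choosing the positive system $R^+$ (equivalently the invariant complex structure $\J$) to our advantage. Setting $c_\a := 1/h_\a^2 > 0$ and expanding each positive root as $\a = \sum_{i=1}^{r} n_i(\a)\a_i$ in terms of a simple root basis $\a_1,\ldots,\a_r$ of $R^+$, equation \eqref{eq} is equivalent, by linear independence of the $\a_i$, to the system
\[
\sum_{\a \in R_\gk^+} c_\a\, n_i(\a) \;=\; \sum_{\a \in R_\gp^+} c_\a\, n_i(\a), \qquad i=1,\ldots,r,
\]
of $r$ linear equalities in the $|R^+|$ positive unknowns $\{c_\a\}_{\a\in R^+}$. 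Equivalently, we must express $0 \in \gt^*$ as a strictly positive combination of the signed family $\{+\a\}_{\a\in R_\gk^+} \cup \{-\a\}_{\a\in R_\gp^+}$. By a Stiemke-type theorem of the alternative, such positive $c_\a$ exist if and only if there is no linear functional $\xi \in \gt$ with $\xi(\a) \geq 0$ on all of $R_\gk^+$ and $\xi(\a) \leq 0$ on all of $R_\gp^+$ with at least one strict inequality.

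The second step is then to fix the invariant complex structure. Different orderings of $R$ produce different partitions $R^+ = R_\gk^+ \cup R_\gp^+$ and different Weyl chambers, hence different standard invariant complex structures on $\gg_o/\gt$; the announced Lemma \ref{L1} supplies an ordering for which the alternative condition above fails, so the associated balanced system admits a strictly positive solution. Once such an ordering is available, the weights $\{c_\a\}$ can be produced explicitly: for instance, by first assigning uniform weights along Weyl subgroup orbits of compact and non-compact roots respectively, and then perturbing in finitely many directions to cancel the residual vector $\sum_{\a\in R_\gk^+}c_\a\a-\sum_{\a\in R_\gp^+}c_\a\a \in \gt^*$.

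The main obstacle is the proof of Lemma \ref{L1} itself, which uses in an essential way the inner-type hypothesis. Inner type forces $\gk$ to be equal-rank with $\gg_o$, so that the compact/non-compact partition of $R$ is encoded by a parity functional $\xi_0 \in \gt$: there exists $\xi_0$ with $\a(\xi_0)\in\bZ$ for all $\a\in R$, compact roots being characterized by $\a(\xi_0)$ even and non-compact roots by $\a(\xi_0)$ odd. I would leverage this parity description together with the explicit classification in Table \ref{table} and the corresponding Vogan diagrams to select an ordering in which the non-compact simple roots form a suitable balanced subset of the Dynkin diagram of $\gg$, verifying the non-degeneracy condition either uniformly or through a short case-by-case check over the classical families $A$, $B$, $C$, $D$ and the five exceptional families of the table.

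Finally, once Lemma \ref{L1} is in place and positive $c_\a$ are exhibited, setting $h_\a := c_\a^{-1/2}$ together with any $\J$-Hermitian inner product $h_\gt$ on $\gt$ defines an invariant $\J$-Hermitian metric on $\gg_o$; by the computation leading to \eqref{eq} this metric is balanced. It extends to a left-invariant balanced metric on the Lie group $\G_o$, and by Borel's theorem it descends to the compact quotient $\M = \G_o/\Gamma$ through any cocompact lattice $\Gamma$.
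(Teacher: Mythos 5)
Your overall architecture matches the paper's: reduce to the linear condition \eqref{eq}, expand in a simple system, and choose the ordering (hence the standard complex structure $\J$) so that a strictly positive solution exists. The Stiemke-type reformulation of the last step --- positive $c_\a$ exist if and only if no nonzero $\xi\in\gt$ satisfies $\a(\xi)\geq 0$ on $R_\gk^+$ and $\a(\xi)\leq 0$ on $R_\gp^+$ --- is correct and is arguably a cleaner packaging than the paper's explicit assignment of weights. But as written the proposal defers all of the actual mathematical content, and in three concrete places it has a gap rather than a proof.

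First, Lemma \ref{L1} is the heart of the theorem and you only sketch a plan for it. In the paper it is a genuine case-by-case argument: in the standard Vogan ordering $\Pi_{nc}$ consists of a single root $\psi$, and one must apply the reflection $s_\psi$ and verify, family by family (classical and exceptional), that the new simple system contains two noncompact simple roots whose sum is a root; the parity description of $R_\gk$ versus $R_\gp$ does not by itself produce such an ordering. Second, even granting Lemma \ref{L1}, it does not defeat your alternative condition on its own: Lemma \ref{L1} only forces a putative Stiemke functional $\xi$ to vanish on $\Pi_{nc}$, and to exclude a $\xi$ that is strictly positive on some compact simple root $\phi_j$ you need to know that some positive noncompact root involves $\phi_j$ with nonzero coefficient. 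That is precisely the paper's Lemma \ref{L2}, proved from $C_{\gk}(\gp)=0$, and it is absent from your argument. Third, Lemma \ref{L1} is simply false for $(\gg_o,\gk)=(\so(1,2n),\so(2n))$: every simple system of $B_n$ contains exactly one short root and $R_\gp$ consists exactly of the short roots, so condition \eqref{prop} can never hold; the paper excludes this case from Lemma \ref{L1} and settles it by an explicit computation with $R_\gk^+=\{\e_i\pm\e_j\}$, $R_\gp^+=\{\e_i\}$. Your reliance on an unrestricted ``announced Lemma \ref{L1}'' breaks down there (the Stiemke criterion itself can still be verified directly in that case, using the non-simple relations $\e_i+\e_j\in R_\gk^+$, but you would have to say so). Until these three points are supplied, what you have is a correct outline of the paper's strategy, not a proof.
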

Note that by Borel's Theorem, we can use a cocompact latice $\Gamma\subset\G_o$ to obtain compact quotients $\M=\G_o/\Gamma$, which will inherit the same balanced structure. \par

We start noting that equation \eqref{eq} involves the unknowns $\{h_\a\}_{\a\in R^+}$ and also a choice of positive roots, i.e. an ordering or equivalenty a complex structure on $\gg_o$. We will always fix a complex structure on $\gt$ once for all. It is known that giving an ordering on the root system $R$ is equivalent to the choice of a system of simple roots $\Pi$ and that two systems of simple roots are conjugate under the action of the Weyl group $W$. We may fix a system of simple roots $\Pi = \{\a_1,\ldots,\a_r\}$ and put 
$\Pi = \Pi_c \cup \Pi_{nc}$, where $\Pi_{c/nc}$ denotes the set of simple roots which are compact or noncompact. We set $\Pi_c=\{\phi_1,\ldots,\phi_k\}$, $\Pi_{nc}=\{\psi_1,\ldots,\psi_l\}$, $k+l=r = {\rm{rank}}(\gg_o)$.
Each root $\a\in R^+$ can be written as 
$$\a = \sum_{i=1}^k n_i(\a)\phi_i + \sum_{j=1}^l m_j(\a)\psi_j$$
for $n_i(\a),m_j(\a)\in\mathbb N$ nonnegative integers. If we set $g_\a:= \frac 1{h_\a^2}$ and $g_j:= g_{\phi_j}, h_j := g_{\psi_j}$, equation \eqref{eq} can be written as 
$$\sum_{\a\in R_\gk^+,\a\not\in\Pi} g_\a\left(\sum n_j(\a)\phi_j + \sum_j m_j(\a)\psi_j\right) + \sum_j g_j \phi_j = $$
$$= \sum_{\a\in R_\gp^+,\a\not\in\Pi} g_\a\left(\sum n_j(\a)\phi_j + \sum_j m_j(\a)\psi_j\right) + \sum_j h_j \psi_j,$$
and therefore 
\beq\label{sys1}
\left\{\begin{aligned}
g_j &= \sum_{\a\in R_\gp^+,\ \a\not\in\Pi} g_\a n_j(\a) &- \sum_{\a\in R_\gk^+,\ \a\not\in\Pi} g_\a n_j(a),{}\ \  &j=1,\ldots,k,\\
h_j &= \sum_{\a\in R_\gk^+,\ \a\not\in\Pi} g_\a m_j(\a) &- \sum_{\a\in R_\gp^+,\ \a\not\in\Pi} g_\a m_j(a),{}\ \  &j=1,\ldots,l.
\end{aligned}\right.\eeq
\begin{remark} If we consider for instance the case $\gg_o=\su(p,q)$ ($p+q$ even, $p,q\geq 2$) and the standard system of simple roots $\Pi=\{\e_1-\e_2,\e_2-\e_3,\ldots,\e_{p-1}-\e_p,\e_p-\e_{p+1},\ldots,\e_{p+q-1}-\e_{p+q}\}$ of $\sl(p+q,\mathbb C)$, then $\Pi_{nc}=\{\e_p-\e_{p+1}\}$ and $\Pi_c$ gives a system of simple roots for the semisimple part $\gk_{ss}$ of $\gk$. This means that every root $\a\in R_\gk^+,\a\not\in \Pi$
is a linear combination of roots in $\Pi_c$ and therefore the righthandside of the last equation in \eqref{sys1} is non-positive, so that \eqref{sys1} has no solution. This shows that the choice of the invariant complex structure might not be straightforward.   
\end{remark}

The following lemmata provide key tools in our argument.
\begin{lemma}\label{L1} For each symmetric pair $(\gg_o,\gk)$ as in Table 1, $(\gg_o,\gk)\not\cong (\so(1,2n),\so(2n))$ and given a Cartan subalgebra $\gt\subseteq \gk$ with corresponding root system $R$, there exists an ordering of the roots, hence a system of simple roots $\Pi$, such that 
\beq\label{prop} \forall \psi \in \Pi_{nc}\ \exists \psi'\in \Pi_{nc}\ {\rm{with}}\ \psi+\psi'\in R.\eeq 
\end{lemma}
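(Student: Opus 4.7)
The plan is to recast condition \eqref{prop} as a graph-theoretic statement on the Dynkin diagram of $\Pi$, justify the exclusion, and then construct a suitable positive system case by case. Since $R$ is reduced, $2\psi \notin R$ for any root, so $\psi+\psi'\in R$ with $\psi,\psi'\in\Pi_{nc}$ forces $\psi\neq\psi'$; and for distinct simple roots their sum is a root if and only if they are joined by an edge of the Dynkin diagram. Hence \eqref{prop} is equivalent to the induced subgraph of the Dynkin diagram on $\Pi_{nc}$ having no isolated vertex (so in particular $|\Pi_{nc}|\geq 2$). For the excluded pair $(\so(1,2n),\so(2n))$, $R$ is of type $B_n$: the compact roots, namely the roots of $\so(2n)$, are the long ones $\pm\e_i\pm\e_j$ ($i<j$), so the non-compact roots are the short ones $\pm\e_i$. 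Since the Weyl group of $B_n$ permutes roots of equal length, every system of simple roots contains exactly one short simple root, so $|\Pi_{nc}|=1$ in every ordering and \eqref{prop} fails. Via the exceptional isomorphism $B_2\cong C_2$, the pair $(\gsp(1,1),\gsp(1)+\gsp(1))$ coincides with $(\so(1,4),\so(4))$ and is thus implicitly excluded.

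For each remaining entry of Table~\ref{table}, I would exhibit an explicit ordering. For $\su(p,q)$ with $p\geq q\geq 1$ and $p+q$ odd, the interleaving $\e_1>\e_{p+1}>\e_2>\e_{p+2}>\cdots>\e_q>\e_{p+q}>\e_{q+1}>\cdots>\e_p$ produces $2q$ consecutive non-compact simple roots forming a chain. For $\so(2p+1,2q)$ with $p\geq 1$, an analogous interleaving places the short simple root adjacent to a non-compact difference $\e_i-\e_j$ with $i\leq p<j$. For $\gsp(2n,\bR)$, flipping the sign of $\e_n$ in the standard ordering (a Weyl-group operation in $C_n$) yields $\Pi=\{\e_1-\e_2,\ldots,\e_{n-1}+\e_n,-2\e_n\}$ whose last two simple roots are non-compact and adjacent. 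For $\gsp(p,q)$ with $(p,q)\neq(1,1)$, for $\so^*(4n)$, and for $\so(2p,2q)$, similar interleavings together with an even number of sign changes on the last coordinates produce $\Pi_{nc}$ with no isolated vertex, exploiting in the type $D$ cases the fork of the $D_N$ Dynkin diagram so that two non-compact simple roots sit at the branching.

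For the exceptional pairs $\gg_{2(2)},\gf_{4(4)},\gf_{4(-20)},\gge_{6(2)},\gge_{6(-14)},\gge_{8(8)},\gge_{8(-24)}$, one works with explicit root-system data. For example, in $\gg_{2(2)}$ the compact subalgebra $\su(2)+\su(2)$ is generated by $\pm\alpha_2$ and $\pm(2\alpha_1+\alpha_2)$, and the simple pair $(-\alpha_1,3\alpha_1+\alpha_2)$ (short plus long at angle $5\pi/6$) gives $\Pi=\Pi_{nc}$ with the two simple roots adjacent and sum $2\alpha_1+\alpha_2\in R$. The main obstacle is this case-by-case verification, most delicate for the Hermitian symmetric pairs $\su(p,q),\gsp(2n,\bR),\so^*(4n),\so(2,2q),\gge_{6(-14)}$, where the canonical complex structure corresponds to an ordering with $|\Pi_{nc}|=1$ and one must deliberately choose a non-Hermitian positive system; and for the exceptional types, each of which has to be inspected directly using its concrete root data.
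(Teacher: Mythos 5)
Your strategy is sound and genuinely different from the paper's. The reformulation of \eqref{prop} as ``the induced subgraph of the Dynkin diagram on $\Pi_{nc}$ has no isolated vertex'' is correct (since $R$ is reduced, $2\psi\notin R$, and for distinct simple roots $\psi+\psi'\in R$ iff $\langle\psi,\psi'\rangle<0$ iff they are joined by an edge), and your treatment of the excluded case $\so(1,2n)$ -- all non-compact roots are short, and every simple system of $B_n$ contains exactly one short root -- is exactly the paper's own justification. Where you diverge is in producing the good ordering: you build it from scratch, case by case, via interleaved orderings of the $\e_i$'s (and sign changes), which forces you to handle each of the thirteen families of Table~\ref{table} separately and leaves most of them ($\gsp(p,q)$, $\so^*(4n)$, $\so(2p,2q)$, and six of the seven exceptional pairs) only asserted rather than verified. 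The paper instead exploits the observation you also make -- that the standard $\Pi$ has $\Pi_{nc}=\{\psi\}$ a singleton -- and applies the single simple reflection $s_\psi$: for each compact simple root $\lambda$ adjacent to $\psi$ with $s_\psi(\lambda)=\lambda+\psi$, the new simple root $\lambda+\psi$ is non-compact (compact plus non-compact), and $(-\psi)+(\lambda+\psi)=\lambda\in R$ gives the required partner for both $-\psi$ and $\lambda+\psi$. This one trick disposes of almost every case uniformly; the only residual case analysis concerns the relative lengths of $\psi$ and its neighbours (where $s_\psi(\lambda)=\lambda+2\psi$ can land back in $R_\gk$). Your approach buys a cleaner conceptual criterion and, in the cases you do work out (e.g.\ the $2q$-chain for $\su(p,q)$, the sign flip for $\gsp(2n,\bR)$, the explicit $\gg_{2(2)}$ system), the computations are correct; but to make the proof complete you would either have to carry out all the deferred verifications explicitly or, better, replace them by the single-reflection argument, which is compatible with your Dynkin-diagram criterion and renders the interleaving constructions unnecessary.
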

This implies that, if $\Pi_{nc}=\{\psi_1,\ldots,\psi_l\}$, then for every $\psi_j\in \Pi_{nc}$ there exists $\a\in R_\gk^+$ with $m_j(\a)\neq 0$ and $\a\in {\rm{Span}}\{\Pi_{nc}\}$.\par\medskip
\noindent {\bf Remark} Note that $\sp(1,1)\cong\so(1,4)$ is also not admissible in the above Lemma. In general, for $\gg_o = \so(1,2n)$ we have the standard system $\Pi=\{\e_i-\e_{i+1},\e_n,\ i=1,\ldots,n-1\}$ with 
$\Pi_{nc}= \{\e_n\}$. As $R_c$ consists precisely of all the short roots, it is clear that for any element $\s$ of the Weyl group $W\cong \mathbb Z_2^n\ltimes \mathcal S_n$ we have that $\s(\Pi)_{nc}$ consists of one element. We will deal with this case later on.
\begin{proof} We first deal with the classical case. We start with the standard system of simple roots $\Pi$, following the notation as in \cite{He}. It is immediate to check that in this case $\Pi_{nc}$ consists of a single root $\psi$. \par 
We first deal with the case where $\psi$ is a short root. Let $\Lambda$ be the set of all simple roots which are connected to $\psi$ in the Dynkin diagram relative to $\Pi$. If $s\in W$ denotes the reflection around $\psi$, then $s$ leaves every element $\Pi\setminus \Lambda$ pointwise fixed. We observe that $\Lambda$ consists of either at most three short roots or it contains a long root.  In the first case, $s(\Lambda) = \{\psi + \lambda|\ \l\in\Lambda\}\subseteq R_\gp$ so that $s(\Pi)_{nc} = \{-\psi,s(\Lambda)\}$ and therefore the system of simple roots $s(\Pi)$ satisfies \eqref{prop}. If $\Lambda$ contains a long root, then it also contains a short root, unless $(\gg_o,\gk)=(\so(2,3),\mathbb R + \so(3))$, that is isomorphic to $(\sp(2),\gu(2))$; this case will be dealt with in the second part of the proof.  Therefore $\Lambda = \{\phi_1,\phi_2\}$ with $\phi_1$ short and $\phi_2$ long. Again the reflection $s$ around $\psi$ gives $s(\phi_1) = \psi+\phi_1$ and $s(\phi_2) = \phi_2+2\psi\in R_\gk$ or $s(\phi_2) = \psi+\phi_2\in R_\gp$. This implies that the system of simple roots $s(\Pi)$ has $s(\Pi)_{nc} = \{-\psi,\psi+\phi_1\}$ or $\{-\psi,\psi+\phi_1,\psi+\phi_2\}$ and in both cases it satisfies \eqref{prop}.\par 
We are left with the case where $\psi$ is a long root, namely the case where $\gg_o=\sp(2n,\mathbb R)$ and $\gk = \gu(2n)$. A standard system of simple roots is given by $\Pi=\{\e_1-\e_2,\e_2-\e_3,\ldots,\e_{2n-1}-\e_{2n},2\e_{2n}\}$ and $\Pi_{nc}= \{\psi=2\e_{2n}\}$. Again using $s_{\b}$, we see that $s_\b(\Pi)_{nc}=\{-2\e_{2n},\e_{2n-1}+\e_{2n}\}$ so that condition \eqref{prop} is satisfied.\par
We may now deal with the exceptional cases. Starting with the standard system of simple roots $\Pi$, we list the set $\Pi_{nc}$, that turns out to consist of a single root $\b$.  For each case, using the symmetry $s_\b$ we obtain  the system of simple roots $\Pi':=s_\b(\Pi)$ that satisfies condition \eqref{prop}.\par 
\noindent (1)\ $(\gg_o,\gk) = (\gg_2,\su(2)+\su(2))$. Here $\Pi=\{\a,\b\}$, with $\b$ long. We have $\Pi_{nc}=\{\b\}$ and $\Pi'=\{-\b,\a+\b\}$.\par
\noindent (2)\ $(\gg_o,\gk) = (\gf_{4(-20)},\so(9))$. According to \cite{He}, the standard system of simple roots is $\Pi=\{\a_1=\e_2-\e_3,\a_2=\e_3-\e_4,\a_3=\e_4,\a_4=\frac 12(e_1-\e_2-\e_3-\e_4)\}$ so that 
$\Pi_{nc}=\{\a_4\}$ and therefore  $\Pi'_{nc}=\{-\a_4,\a_4+\a_3\}$.\par 
\noindent (3)\ $(\gg_o,\gk) = (\gf_{4(4)},\su(2)+\gsp(3))$. In this case $\Pi_{nc}=\{\a_1\}$ and therefore $\Pi'_{nc}=\{-\a_1,\a_1+\a_2\}$. \par
\noindent\ (4)\ $(\gg_o,\gk)=(\gge_{8(8)},\so(16))$. For $\gge_8$ we have the standard system of simple roots 
$$\a_1=\frac 12(\e_1+\e_8)-\frac 12(\e_2+\e_3+\e_4+\e_5+\e_6+\e_7), \a_2=\e_1+\e_2, $$ 
$$\a_j=\e_{j-1}-\e_{j-2},\ j=3,\ldots,8.$$
Then $\Pi_{nc}=\{\a_1\}$ and $\Pi'_{nc}=\{-\a_1,\a_1+\a_3\}$.\par 
\noindent\ (5)\ $(\gg_o,\gk)=(\gge_{8(-24)},\su(2)+\gge_7)$. Keeping the same notation for simple roots as above, we have $\Pi_{nc} = \{\a_8\}$ and $\Pi'_{nc}=\{-\a_8,\a_8+\a_7\}$.\par
\noindent\ (6)\ $(\gg_o,\gk)=(\gge_{6(2)},\su(2)+\su(6))$. As the system root system $\Pi$ can be taken to be composed of the simple roots $\{\a_1,\ldots,\a_6\}$ of $\gge_8$, we have 
$\Pi_{nc}=\{\a_2\}$ and $\Pi'_{nc}=\{-\a_2,\a_2+\a_4\}$.\par 
\noindent\ (7)\ $(\gg_o,\gk)=(\gge_{6(-14)},\mathbb R+\so(10))$. We have $\Pi_{nc}=\{\a_1\}$ and $\Pi'_{nc}=\{-\a_1,\a_1+\a_3\}$.\end{proof}


\begin{lemma}\label{L2} For every system of simple roots $\Pi = \Pi_{c}\cup \Pi_{nc}$ 
 with $\Pi_{c}=\{\phi_1,\ldots,\phi_k\}$ we have 
$$\forall\ j=1,\ldots,k,\ \exists\ \a\in R_\gp^+,\ \a\not\in \Pi : \ n_j(\a)\neq 0,$$
 where $n_j(\a)$ denotes the coordinate of $\a$ along the root $\phi_j$.
\end{lemma}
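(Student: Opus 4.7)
The plan is to prove Lemma~\ref{L2} by constructing, for each compact simple root $\phi_j$, a non-compact positive root containing $\phi_j$ with nonzero coefficient, built as the sum of simple roots along a shortest path in the Dynkin diagram from $\phi_j$ to $\Pi_{nc}$. The guiding principle is that a connected string of simple roots produces a root by summation, and the parity of non-compact simple roots in that sum determines whether the result is compact or non-compact.

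First I fix $j \in \{1,\ldots,k\}$. Since $\gg_o$ is non-compact simple, its Dynkin diagram is a connected tree and $\Pi_{nc} \neq \emptyset$ (otherwise every root would be compact, contradicting $\gp \neq 0$). I choose a shortest path $\phi_j = \beta_0, \beta_1, \ldots, \beta_s = \psi$ in the Dynkin diagram with $\psi \in \Pi_{nc}$; by minimality the intermediate nodes $\beta_1,\ldots,\beta_{s-1}$ lie in $\Pi_c$, and since $\phi_j \in \Pi_c$ while $\psi \in \Pi_{nc}$ we have $s \geq 1$. Set $\alpha := \beta_0 + \beta_1 + \cdots + \beta_s$.

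I show $\alpha \in R$ by induction on $s$. For $s = 1$, $\langle \beta_0,\beta_1\rangle < 0$ since $\beta_0,\beta_1$ are adjacent in the Dynkin diagram, so $\beta_0 + \beta_1 \in R$. For $s > 1$, by induction $\sigma := \beta_0 + \cdots + \beta_{s-1} \in R$; since the diagram is a tree and the path is shortest, $\beta_s$ is adjacent to $\beta_{s-1}$ but to no earlier $\beta_i$ in the path, so $\langle \sigma,\beta_s\rangle = \langle \beta_{s-1},\beta_s\rangle < 0$. As $\sigma$ is positive while $-\beta_s$ is negative, $\sigma \neq -\beta_s$, and the standard fact that two non-opposite roots with negative inner product sum to a root yields $\alpha = \sigma + \beta_s \in R$.

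It remains to verify the three required features of $\alpha$. Clearly $n_j(\alpha) = 1 \neq 0$, and $s \geq 1$ ensures $\alpha$ is the sum of at least two distinct simple roots, so $\alpha \notin \Pi$. For $\alpha \in R_\gp$: since $\gg_o$ is of inner type, the Cartan involution pointwise fixes $\gt$ and preserves every root space, acting as $+1$ on each $\gg_\gamma \subset \gk^c$ and as $-1$ on each $\gg_\gamma \subset \gp^c$. This defines a sign character $\epsilon$ of the root lattice with $\epsilon|_{\Pi_c} = +1$ and $\epsilon|_{\Pi_{nc}} = -1$, so $\epsilon(\gamma) = (-1)^{\sum_j m_j(\gamma)}$ for every root $\gamma$. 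In our construction only the endpoint $\psi$ contributes a non-compact summand, giving $\sum_j m_j(\alpha) = 1$ and hence $\alpha \in R_\gp^+$. No step is especially delicate; the main ingredient is the classical fact that summing simple roots along a shortest path in a Dynkin diagram yields a root, which in turn rests on shortest paths in trees having no chords.
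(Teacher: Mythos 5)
Your proof is correct, but it takes a genuinely different route from the paper's. The paper argues Lie-theoretically: since $\gg_o$ is simple, $C_{\gk^c}(\gp^c)=\{0\}$, so $[E_{\phi_j},\gp^c]\neq\{0\}$ and there is $\g\in R_\gp$ with $\phi_j+\g\in R_\gp$; a short case analysis on the sign of $\g$ (writing $\g$ in the basis $\Pi$ and noting $\g\neq-\phi_j$) then produces the desired $\a\in R_\gp^+\setminus\Pi$ with $n_j(\a)\geq 1$. You instead construct $\a$ explicitly as the sum of the simple roots along the (chordless, since the diagram is a tree) shortest path from $\phi_j$ to $\Pi_{nc}$, using two standard facts: that summing simple roots along such a path stays inside $R$, and that for an inner symmetric pair the Cartan involution acts on each root space as the sign $(-1)^{\sum_j m_j(\g)}$, so a root is non-compact exactly when its total $\Pi_{nc}$-coefficient is odd. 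Both arguments are classification-free and of comparable length; the paper's is slightly more economical in prerequisites (it needs only the centralizer fact and the string property of roots), while yours is more constructive — it exhibits a concrete $\a$ with $n_j(\a)=1$ and makes transparent \emph{why} such a root exists, namely the connectedness of the Dynkin diagram together with $\Pi_{nc}\neq\emptyset$. Your appeal to the parity character is exactly the mechanism that also underlies the paper's observation (in the Remark following Lemma \ref{L1}) about which roots are compact, so the two proofs are consistent in spirit even though the logical paths differ.
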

\begin{proof} We start noting that the centralizer $C_{\gk^c}(\gp^c) = C_{\gk}(\gp)^c = \{0\}$. It then follows that $[E_{\phi_j},\gp^c]\neq \{0\}$, hence there exists $\g\in R_\gp$ with $[E_{\phi_j},E_\g]\neq 0$, i.e. $\phi_j+\g \in R_\gp$. Now, if $\g>0$, then $\a:= \phi_j+\g\in R_\gp^+\setminus\Pi$ and $n_j(\a)\geq 1$. Suppose now $\g<0$. We write $\g=c_j\phi_j + \sum_{\theta\in \Pi\setminus{\phi_j}} c_\theta\theta$ for some nonpositive integers $c_j,c_\theta$. As $\g\neq -\phi_j$, there exists at least one negative coefficient $c_\theta<0$, for some $\theta\in \Pi,\theta\neq\phi_j$. Therefore the root $\g+\phi_j$ must be negative and $1+c_j\leq 0$, i.e. $\a:=-\g\in R_\gp^+\setminus\Pi$ and $n_j(\a)=-c_j\geq 1$.
\end{proof}

We now fix a system of simple roots $\Pi$ as in Lemma \ref{L1}. In order to solve the corresponding system of equations \eqref{sys1} for the positive unknowns $\{g_i,h_j,g_\a\}$, we will show how to choose the positive values $\{g_\a\}_{\a\in R^+\setminus\Pi}$ in such a way to guarantee that the constants $\{g_i,h_j\}$,
defined to satisfy \eqref{sys1}, are positive.\par 
We set 
$$\Sigma_\gk := \{\a\in R_\gk^+|\ \a\not\in \Pi, \a\in {\rm{Span}}\{\Pi_{nc}\}\},\qquad A_\gk = (R_\gk\setminus \Pi_c) \setminus \Sigma_\gk.$$
Then the system of equations \eqref{sys1} can be written as 
\beq\label{sys2}
\left\{\begin{aligned}
g_j &= \sum_{\a\in R_\gp^+,\ \a\not\in\Pi} g_\a n_j(\a) - \sum_{\a\in A_\gk} g_\a n_j(a),{}\ \  &j=1,\ldots,k,\ &(1)\\
h_j &= \sum_{\a\in R_\gk^+,\ \a\not\in\Pi} g_\a m_j(\a) - \sum_{\a\in R_\gp^+,\ \a\not\in\Pi} g_\a m_j(a),{}\ \  &j=1,\ldots,l.\ &(2)
\end{aligned}\right.\eeq
We start assigning $g_\a=1$ for every $\a\in A_\gk$. \par 
Then, for every $j=1,\ldots,k$, we use Lemma \ref{L2} selecting a root $\a\in R_\gp^+$ with $n_j(\a)\neq 0$, $\a\not\in \Pi$. This root $\a$, which depends on $j$, contributes to the first sum in the righthandside of equation (1) in \eqref{sys2} and the value $g_\a$ can be chosen big enough so that $g_j$ is strictly positive. Summing up, we can assign values 
$\{g_\a\}_{\a\in R_\gp^+\setminus \Pi_{nc}}$ so that all $g_j$, $j=1,\ldots,k$ can be defined as in \eqref{sys2}, (1), and are strictly positive. \par 
We now turn to equation \eqref{sys2}-(2), which can now be written as 
\beq\label{2}h_j= \sum_{\a\in \Sigma_\gk} g_\a m_j(\a) +\sum_{\a\in A_\gk} m_j(\a)  - \sum_{\a\in R_\gp^+,\ \a\not\in\Pi} g_\a m_j(a),\eeq
where in the righthandside the last two sums have a fixed value. Now, by Lemma \ref{L1}, we know that for every $j=1,\ldots,l$, we can find $\a\in \Sigma_\gk$ with $m_j(\a)\neq 0$. These roots can be used to choose the coefficients $g_\a$ big enough to guarantee that $h_j>0$, when defined to satisfy \eqref{2}, is strictly positive.\par 
In order to complete the proof of our main result Theorem \eqref{main}, we are left with the case $(\gg_o,\gk)= (\so(1,2n),\so(2n))$ with standard system of simple roots $\Pi=\{\e_i-\e_{i+1},\e_n,\ i=1,\ldots,n-1\}$, $\Pi_{nc}=\{\e_n\}$. We see that 
$$R_\gk^+=\{\e_i\pm\e_j,\ i< j\},\quad R_\gp =\{\e_1,\ldots,\e_n\}.$$
Now, we use equation \eqref{eq} and search for positive real numbers $\{x,y,z_i,\ i=1,\ldots,n\}$ so that 
$$x\cdot\sum_{i<y}\e_i-\e_j + y\cdot \sum_{i< j} \e_i+e_j = \sum_{i=1}^n z_i\e_i,$$
i.e.
$$\sum_{i=1}^n [(x+y)(n-i)+(x-y)(i-1)]\e_i = \sum_{i=1}^n z_i\e_i.$$
It is clear that the above equation has positive solutions by simply choosing $x>y>0$.
\begin{remark} We can consider the metric $h_o$ which coincides with $-B$ on the compact part $\gk$, with $B$ on $\gp$ and such that $h_o(\gk,\gp)=0$. This metric is easily seen to depend only on $\gg_o$ and {\it not} on the Cartan decomposition $\gg_o=\gk+\gp$. We could then ask whether there exists a suitable complex structure such that the metric $h_o$ turns out to be balanced. The resulting equation has been already treated in \cite{AP} and has a solution if and only if $\gg_o = \su(p,p+1)\cong \su(p+1,p)$ for $p\geq 1$.   
 \end{remark}
\section{Non-existence of pluriclosed metrics}
In this section we prove the following non-existence result
\begin{proposition} The compact complex manifolds ($\M,\J$), wheer $\M=\G_o/\Gamma$, do not admit any pluriclosed metric.
 \end{proposition}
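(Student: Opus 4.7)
The plan is to reduce the problem to an algebraic non-existence statement and derive a contradiction from the non-compact root structure dictated by the complex structure $\J$ constructed in Section \ref{proof}. First, by the averaging argument of the proposition in Section \ref{inv}, if $(\M,\J)$ admits a pluriclosed Hermitian metric, then $(\G_o,\J)$ admits a left-invariant and right-$\T$-invariant pluriclosed metric. Any such metric has the form described in Section \ref{inv}, parametrized by positive constants $\{h_\alpha^2\}_{\alpha\in R^+}$ and an arbitrary $\J$-Hermitian inner product on $\gt$, with fundamental $(1,1)$-form
\[
\omega \;=\; -i\sum_{\alpha\in R^+}h_\alpha^2\,\theta^\alpha\wedge\bar\theta^\alpha + \omega_\gt,
\]
where $\theta^\alpha$ denotes the $(1,0)$-form dual to $E_\alpha$. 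It thus suffices to show that no such $\omega$ satisfies $\partial\bar\partial\omega=0$.

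Next, I would compute $\partial\bar\partial\omega$ as a left-invariant $(2,2)$-form on $\gg_o^c$ via the Chevalley-Eilenberg differential: using the Maurer-Cartan relations for $d\theta^\alpha$, which involve the structure constants $N_{\beta,\gamma}$ through $[E_\beta,E_\gamma]=N_{\beta,\gamma}E_{\beta+\gamma}$ and the torus action $[H,E_\beta]=\beta(H)E_\beta$, one extracts a polynomial expression for $\partial\bar\partial\omega$ in the $h_\alpha^2$'s. By $\ad(\gt)$-invariance, this $(2,2)$-form is determined by its evaluations on weight-zero 4-tuples of weight vectors $(E_\alpha,E_\beta,\bar E_\gamma,\bar E_\delta)$ with $\alpha+\beta=\gamma+\delta$, turning $\partial\bar\partial\omega=0$ into a finite linear system in $\{h_\alpha^2\}$, in analogy with the derivation of equation \eqref{eq} from the balanced condition.

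The crux of the proof is to exhibit a test 4-tuple yielding an equation without positive solutions. Here I would use Lemma \ref{L1}: for each $\psi\in\Pi_{nc}$ there exists $\psi'\in\Pi_{nc}$ with $\gamma:=\psi+\psi'\in R_\gk$. Evaluating $\partial\bar\partial\omega$ on $(E_\psi,E_{\psi'},\bar E_\psi,\bar E_{\psi'})$, the dominant contribution arises from the Jacobi-type triangle involving $[E_\psi,E_{\psi'}]=N_{\psi,\psi'}E_\gamma$, combined with the torus-valued brackets $[E_\psi,\bar E_\psi]=H_\psi$, $[E_{\psi'},\bar E_{\psi'}]=H_{\psi'}$ and the off-diagonal $[E_\psi,\bar E_{\psi'}]\in\gg_{\psi-\psi'}$. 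I expect the resulting identity to take the schematic form
\[
\frac{|N_{\psi,\psi'}|^2}{h_\gamma^2} + (\text{non-negative terms}) = 0,
\]
which has no positive solution; summing over the pairs furnished by Lemma \ref{L1} closes the argument.

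The main obstacle is exactly this sign analysis: after expanding $\partial\bar\partial\omega$ via Chevalley-Eilenberg, I must verify that all contributing terms combine with the correct sign, exploiting the integrability of $\J$ (i.e.\ $[\gg_o^{10},\gg_o^{10}]\subseteq\gg_o^{10}$), the Jacobi identity, and the specific compact/non-compact pairing of roots dictated by Table \ref{table}. The exceptional case $(\gg_o,\gk)=(\so(1,2n),\so(2n))$, excluded from Lemma \ref{L1}, must be handled separately by a direct computation in the $B_n$ root system, using the pair of non-simple non-compact positive roots $(\e_n,\e_{n-1})$ and the fact that $\e_n+\e_{n-1}\in R_\gk^+$.
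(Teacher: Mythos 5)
Your strategy coincides with the paper's at every structural point: average to a left\-/invariant, right $\T$-invariant metric, expand $dd^c\o$ on $\ad(\gt)$-weight-zero quadruples of root vectors to get algebraic equations in the metric coefficients, derive a contradiction from the pair $\psi,\psi'\in\Pi_{nc}$ with $\psi+\psi'\in R_\gk$ supplied by Lemma \ref{L1}, and treat $\so(1,2n)$ separately. However, the crux step as you describe it does not work. Evaluating $dd^c\o$ on the single quadruple $(E_\psi,E_{-\psi},E_{\psi'},E_{-\psi'})$ yields (this is \eqref{eq1}, with $a_\g:=h(E_\g,E_{-\g})=\mp h_\g^2$ according to $\g\in R_\gk^+$ or $\g\in R_\gp^+$)
$$h(H_\psi,H_{\psi'})=N_{\psi,\psi'}^2\,\bigl(a_{\psi+\psi'}-a_\psi-a_{\psi'}\bigr),$$
since $\psi-\psi'\notin R$. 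The right-hand side does have a definite sign (it is negative, because $a_{\psi+\psi'}<0$ while $a_\psi,a_{\psi'}>0$), but the left-hand side is governed by the Hermitian metric on the toral part $\gt$, which is a free parameter of the construction and, for $\psi\neq\psi'$, has no prescribed sign; the equation can therefore be satisfied by adjusting $h|_\gt$. So your expected identity ``$|N_{\psi,\psi'}|^2/h_{\psi+\psi'}^2$ plus non-negative terms equals zero'' is not what the computation produces (note also that the coefficients enter as $a_\a=\pm h_\a^2$, not as $1/h_\a^2$; the reciprocals appear only in the balanced equation, where one traces against an orthonormal basis).

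To close the argument two further ingredients are needed, and the paper supplies both: (i) an elementary lemma stating that at least one of $\psi+2\psi'$, $\psi'+2\psi$ is not a root, which guarantees that for a suitable ordering of the pair the equation associated with $(\a,\b)=(\psi+\psi',\psi)$ carries no extra root-string terms; and (ii) a linear combination of the two equations for the pairs $(\psi,\psi')$ and $(\psi+\psi',\psi)$ which, using $H_{\psi+\psi'}=H_\psi+H_{\psi'}$, cancels the indeterminate off-diagonal toral term and isolates a diagonal quantity $h(H_{\psi_i},H_{\psi_i})<0$, equated to the manifestly positive number $(N_{\psi+\psi',-\psi}^2+N_{\psi,\psi'}^2)(a_\psi+a_{\psi'}-a_{\psi+\psi'})$. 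A similar but more delicate combination is required for $\so(1,2n)$, where $\e_1-\e_2$ is also a compact root and its string contributes, the final positive expression being $N^2(a_{\psi_2}+3a_{\psi_1}-2a_{\phi_1}-a_{\phi_2})$. Without step (ii) your plan stalls exactly at the ``sign analysis'' you yourself flag as the main obstacle.
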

Note that in the above statement $\J$ is the complex structure we have exhibited in section \ref{proof}. \par 
Now, if $h$ is any such metric, we can obtain a pluriclosed invariant metric $h$ on $\G_o$ which is also invariant under right $\T$-translations. It follows that on $\gg$ we have 
$$h(\gg_\a,\gg_\b) = 0 \quad {\rm{if}}\quad \b\neq-\a.$$
In order to write down the condition $dd^c\o=0$, where $\o$ is the fundamental form of $h$, we recall the Koszul's formula for the differential of invariant forms. If $\phi$ is any invariant $k$-form on $\G_o$ or equivalently on $\gg_o$, then for every $v_o,\ldots,v_{k}$ in $\gg_o$ 
$$d\phi(v_o,\ldots,v_{k}) = \sum_{i<j}(-1)^{i+j}\phi([x_i,x_j],
v_1,\ldots,\widehat{v_i},\ldots,\widehat{v_j}\ldots,v_{k}).$$
We set $\phi:=d^c\o$ and compute $d\phi(E_\a,E_{-\a},E_\b,E_{-\b})$ for $\a,\b\in R^+$. We have 
$$d\phi(E_\a,E_{-\a},E_\b,E_{-\b}) = -\phi(H_\a,E_\b,E_{-\b}) +
\phi(N_{\a\b}E_{\a+\b},E_{-\a},E_{-\b})$$
$$-\phi(N_{\a,-\b}E_{\a-\b},E_{-\a},E_\b)-\phi(N_{-\a,\b}E_{\b-\a},E_\a,E_{-\b})+\phi(N_{-\a,-\b}E_{-\a-\b}E_\a,E_\b)$$
$$- \phi(H_\b,E_\a,E_{-\a}), $$
where we use the standard notation $[E_{\g},E_{\e}]=N_{\g,\e}E_{\g+\e}$ for every $\g,\e\in R$. Using the known identities for the Weyl basis (see \cite{He}, p. 172,176), we can write that 
$$d\phi(E_\a,E_{-\a},E_\b,E_{-\b}) = -\phi(H_\a,E_\b,E_{-\b}) 
- \phi(H_\b,E_\a,E_{-\a})$$
$$+2
\phi(N_{\a\b}E_{\a+\b},E_{-\a},E_{-\b}) - 2\phi(N_{\a,-\b}E_{\a-\b},E_{-\a},E_\b).$$
We also introduce the notation $JE_\g= i\e_\g E_\g$ for every $\g\in R$, where $\e_\g=\pm 1$ according to $\g\in R^{\pm}$. Then
$$dd^c\o(E_\a,E_{-\a},E_\b,E_{-\b}) =  -d\o(JH_\a,E_\b,E_{-\b}) -  d\o(JH_\b,E_\a,E_{-\a}) $$
$$- 2iN_{\a,\b} d\o(E_{\a+\b},E_{-\a},E_{-\b}) - 2iN_{\a,-\b}\e_{\a-\b}d\o(E_{\a-\b},E_{-\a},E_\b).$$ 
Now we easily compute 
$$d\o(JH_\a,E_\b,E_{-\b}) = -\o(H_\b,JH_\a)$$
and 
$$d\o(E_{\a+\b},E_{-\a},E_{-\b}) = N_{\a,\b}(\o(E_\a,E_{-\a})+\o(E_\b,E_{-\b}) - \o(E_{\a+\b},E_{-\a-\b})),$$
where we have used the fact that $N_{\a,\b}= N_{\a+\b,-\b}=-N_{\a+\b,-\a}$ (see \cite{He}, p. 172). Similarly,
$$d\o(E_{\a-\b},E_{-\a},E_\b) = N_{\a,-\b}( - \o(E_\b,E_{-\b}) + \o(E_\a,E_{-\a})-\o(E_{\a-\b},E_{\b-\a})).$$
Summing up we have 
$$dd^c\o(E_\a,E_{-\a},E_\b,E_{-\b}) = -2\o(JH_\a,H_\b)$$
$$-2iN_{\a,\b}^2(\o(E_\a,E_{-\a})+\o(E_\b,E_{-\b}) - \o(E_{\a+\b},E_{-\a-\b}))$$
$$-2iN_{\a,-\b}^2\e_{\a-\b}(- \o(E_\b,E_{-\b}) + \o(E_\a,E_{-\a})-\o(E_{\a-\b},E_{\b-\a})).$$
We now set $a_\a : h(E_\a,E_{-\a})$. The pluriclosed condition implies that 
$$0=-h(H_\a,H_\b) -iN_{\a,\b}^2(-i a_\a -i a_\b + ia_{\a+\b})$$
$$- iN_{\a,-\b}^2\e_{\a-\b}( -i\e_{\b-\a}a_{\a-\b}  + i a_\b- ia_\a)$$
hence
\beq\label{eq1} h(H_\a,H_\b) = N_{\a,\b}^2(a_{\a+\b}- a_\a - a_\b)
+ N_{\a,-\b}^2\e_{\a-\b}( \e_{\a-\b}a_{\a-\b}  +  a_\b-  a_\a)\eeq
We recall that 
$$a_\a = h(E_\a,E_{-\a}) = - h(v_\a,v_\a) < 0,\qquad \a\in R_\gk^+,$$
$$a_\a = h(E_\a,E_{-\a}) = h(v_\a,v_\a) > 0, \qquad \a\in R_\gp^+,$$
$$h(H_\a,H_\b) = -h(iH_\a,iH_\b)\in \mathbb R,\qquad 
h(H_\a,H_\a) < 0.$$

Now, we recall that the existence of the complex structure $\J$, which we constructed in section \ref{proof}, relies on Lemma \ref{L1}. In particular, when $\gg_o\neq \so(1,2n)$, we have the existence of two simple roots $\psi_1,\psi_2\in \Pi_{nc}$ with $\psi_1+\psi_2=\phi\in R_\gk$. The following lemma is elementary.
\begin{lemma} Either $\psi_1+2\psi_2\not\in R$ or $\psi_2+2\psi_1\not\in R$.\end{lemma}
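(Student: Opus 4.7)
The plan is to reduce the statement to an elementary fact about the rank-two root subsystem generated by $\psi_1$ and $\psi_2$, using the standard theory of root strings and Cartan integers. First I would observe that $\psi_1$ and $\psi_2$ are genuinely distinct elements of $\Pi_{nc}$: since all root systems appearing here are reduced, $2\psi_i$ is never a root, so the hypothesis $\psi_1+\psi_2 = \phi \in R$ forces $\psi_1\neq\psi_2$. Being distinct simple roots, $\psi_1$ and $\psi_2$ are linearly independent, and moreover $\psi_1-\psi_2\notin R$ (the difference of two distinct simple roots is never a root, since one of $\pm(\psi_1-\psi_2)$ would otherwise contradict simplicity).

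Now I would invoke the $\alpha$-string formula: if $\alpha,\beta$ are distinct simple roots, then since $\beta-\alpha\notin R$, the $\alpha$-string through $\beta$ has the shape
\[
\beta,\ \beta+\alpha,\ \ldots,\ \beta+q\alpha,\qquad q=-\langle\beta,\alpha^\vee\rangle\ge 0.
\]
Applying this with $(\alpha,\beta)=(\psi_2,\psi_1)$ shows that if $\psi_1+2\psi_2\in R$ then $\langle\psi_1,\psi_2^\vee\rangle\le -2$; similarly, the assumption $\psi_2+2\psi_1\in R$ would give $\langle\psi_2,\psi_1^\vee\rangle\le -2$. Assuming both, the product
\[
\langle\psi_1,\psi_2^\vee\rangle\,\langle\psi_2,\psi_1^\vee\rangle\ \ge\ 4.
\]

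To conclude, recall the identity
\[
\langle\psi_1,\psi_2^\vee\rangle\,\langle\psi_2,\psi_1^\vee\rangle=4\cos^2\theta,
\]
where $\theta$ is the angle between $\psi_1$ and $\psi_2$. This quantity is always $\le 4$, with equality if and only if $\psi_1$ and $\psi_2$ are proportional, contradicting their linear independence. Hence at least one of $\psi_1+2\psi_2$, $\psi_2+2\psi_1$ fails to be a root, which is exactly the claim. There is no real obstacle here: the only thing one has to be slightly careful about is the preliminary verification that $\psi_1-\psi_2\notin R$ (so that the $\alpha$-string starts at $\beta$ itself), but this is immediate from simplicity.
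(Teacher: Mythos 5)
Your argument is correct and is essentially the paper's own proof: both use that $\psi_1-\psi_2\notin R$ to identify the root-string length with the Cartan integer $q_j=-\langle\psi_i,\psi_j^\vee\rangle$, and then rule out $q_1,q_2\ge 2$ because the product of the two Cartan integers is $4\cos^2\theta<4$ for non-proportional roots. No substantive difference.
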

\begin{proof} As $\psi_1,\psi_2$ are simple, we have $\pm(\psi_1-\psi_2)\not\in R$. Now, $\psi_i+n\psi_j\in R$ if and only if $0\leq n\leq q_j$ with 
$q_j = -2\frac{\langle \psi_1,\psi_2\rangle}{||\psi_j||^2}\in\mathbb N$ for $i\neq j$. It is then clear that $q_1,q_2\geq 2$ is impossible, as $\psi_1\neq \psi_2$ implies $q_1\cdot q_2 < 4$.\end{proof}
Suppone then that $\phi+\psi_1 = \psi_2+2\psi_1\not \in R$. We now apply \eqref{eq1} with two possible choices for $\a,\b$, namely:\par \medskip
\noindent (1)\ $\a=\psi_1,\b=\psi_2$. Then 
$$h(H_{\psi_1},H_{\psi_2}) = N_{\psi_1,\psi_2}^2(a_\phi-a_{\psi_1}-a_{\psi_2}).$$
(2)\ $\a=\phi,\b=\psi_1$. Then 
$$h(H_{\phi},H_{\psi_2}) = N_{\phi,-\psi_1}^2(a_{\psi_2}+a_{\psi_1}-a_{\phi}).$$
Subtracting (1) from (2) we get 
$$h(H_{\psi_2},H_{\psi_2}) = \left(N_{\phi,-\psi_1}^2+N_{\psi_1,\psi_2}^2\right)(a_{\psi_2}+a_{\psi_1}-a_{\phi}).$$
This is a contradiction, as $h(H_{\psi_2},H_{\psi_2})<0$, while 
$a_{\psi_i}>0$ for $i=1,2$ and $a_\phi<0$.\par 
We are left with the case $\gg_o=\so(1,2n)$, that we have dealt with separately in section \ref{proof}. In this case the complex structure $\J$ is defined by the standard system of positive roots, namely $R^+=\{\e_i\pm\e_j, \e_i,\ 1\leq i\neq j\leq n\}$. In particular $R_\gk^+ =\{\e_i\pm\e_j\}_{i\neq j}$ and $R_\gp^+=\{\e_i\}_{i=1,\ldots,n}$. We now consider $\psi_i=\e_i$, $i=1,2$, $\phi_1=\psi_1+\psi_2\in R_\gk^+$ and $\phi_2=\psi_1-\psi_2\in R_\gk^+$.   We apply \eqref{eq1} in two different ways: \par
\noindent (1)\ $\a=\psi_1,\b=\psi_2$. Then 
$$h(H_{\psi_1},H_{\psi_2}) = N_{\psi_1,\psi_2}^2(a_\phi-a_{\psi_1}-a_{\psi_2}) + N_{\psi_1,-\psi_2}^2(a_{\phi_2}+a_{\psi_2}-a_{\psi_1}).$$
(2)\ $\a=\phi_1,\b=\psi_2$. Note that $\phi_1+\psi_1\not\in R$. Then 
$$h(H_{\phi_1},H_{\psi_1}) =  N_{\phi_1,-\psi_1}^2(a_{\psi_2}+a_{\psi_1}-a_{\phi_1}).$$
Therefore 
$$h(H_{\psi_1},H_{\psi_1}) = (N_{\phi_1,-\psi_1}^2+N_{\psi_1,\psi_2}^2)(a_{\psi_2}+a_{\psi_1}-a_{\phi_1})+ N_{\psi_1,-\psi_2}^2(a_{\psi_1} -a_{\phi_2}-a_{\psi_2})$$
We now recall that, if $\g,\d\in R$, then $N_{\g,\d}^2= \frac{q(1-p)}{2}||\g||^2$, where $\d+n\g$, $p\leq n\leq q$, is the $\g$-series containing $\d$ (see \cite{He}, p.176). We then immediately see that $N_{\psi_1,\psi_2}^2 = N_{\psi_1,-\psi_2}^2$ and noting furthermore that $N_{\phi_1,-\psi_1}^2 = N_{\psi_1,\psi_2}^2$,  we can write 
$$h(H_{\psi_1},H_{\psi_1}) = N_{\psi_1,\psi_2}^2(a_{\psi_2}+3 a_{\psi_1}-2a_{\phi_1}- a_{\phi_2}),    $$
giving the contradiction $h(H_{\psi_1},H_{\psi_1}) >0$.

\section{Geometric properties}
In this section, we prove the following result, which may contribute to shed to some light on the geometry of the complex balanced manifolds we have constructed in the previous sections. 
\begin{theorem} If $(\M,\J,h)$ is a balanced $n$-dimensional manifold, where $\M=\G_o/\Gamma$, $\J$ is a standard invariant complex structure and $h$ is a balanced Hermitian metric, then the metric $h$ has vanishing Chern scalar curvature.\par 
Moreover $c_1(\M)=0$ and the Kodaira dimension $\kappa(M) = -\infty$.
 \end{theorem}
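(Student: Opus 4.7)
I plan to establish the three assertions—$c_1(\M)=0$, $s^{Ch}=0$, and $\kappa(\M)=-\infty$—in that order, throughout exploiting the $\G_o$-left-invariance of the data $(\J,h)$ together with the Whitehead vanishing lemmas for the semisimple Lie algebra $\gg_o$.

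First, the Chern--Ricci form $\rho^{Ch}$ of $h$ is a closed real left-invariant $(1,1)$-form descending to a representative of $2\pi c_1(\M)$ on $\M$. Regarded as a $2$-cocycle in the Chevalley--Eilenberg complex $\Lambda^\bullet\gg_o^*$, Whitehead's second lemma $H^2(\gg_o,\mathbb R)=0$ furnishes an invariant primitive $\gamma\in\gg_o^*$ with $\rho^{Ch}=d\gamma$; since $\gamma$ also descends to $\M$, we obtain $c_1(\M)=[\rho^{Ch}]/(2\pi)=0$. For the scalar curvature, $s^{Ch}$ is constant on $\M$ by invariance, and the pointwise identity $n\,\rho^{Ch}\wedge\omega^{n-1}=s^{Ch}\omega^n$ together with the balanced condition $d\omega^{n-1}=0$ and the primitive $\gamma$ give
$$\frac{s^{Ch}}{n}\int_{\M}\omega^n=\int_{\M}\rho^{Ch}\wedge\omega^{n-1}=\int_{\M}d(\gamma\wedge\omega^{n-1})=0,$$
so $s^{Ch}=0$.

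For $\kappa(\M)=-\infty$, the plan is to show $H^0(\M,K_{\M}^{\otimes m})=0$ for every $m\ge 1$. Assuming a nonzero holomorphic section $\sigma$ of $K_{\M}^{\otimes m}$, the induced Chern curvature is $-m\rho^{Ch}$ with $\omega$-trace $-m\,s^{Ch}=0$. The Lelong--Poincar\'e identity $i\partial\bar\partial\log|\sigma|^2_h=2\pi[Z_\sigma]+m\rho^{Ch}$, wedged with $\omega^{n-1}$ (closed by balancedness) and integrated over $\M$, forces the zero divisor $Z_\sigma$ to be empty; then $\log|\sigma|^2_h$ is smooth on $\M$ and satisfies $\mathrm{tr}_\omega(i\partial\bar\partial\log|\sigma|^2_h)=m\,s^{Ch}=0$, so by the maximum principle for the elliptic operator $\mathrm{tr}_\omega\circ i\partial\bar\partial$ (which vanishes on constants), $|\sigma|^2_h$ is a positive constant. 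Consequently $\sigma$ is parallel for the Chern connection of $K_{\M}^{\otimes m}$, whose curvature $-m\rho^{Ch}$ must therefore vanish, giving $\rho^{Ch}\equiv 0$ on $\G_o$.

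The main obstacle is the ensuing contradiction: I must show that in fact $\rho^{Ch}\not\equiv 0$. The plan is to use the invariant $(n,0)$-form $\Omega$ trivializing $K_{\G_o}$ and the Koszul formula to compute $\bar\partial\Omega=\eta\wedge\Omega$ with $\eta(\bar Y)=-\mathrm{tr}\bigl(\mathrm{ad}(\bar Y)|_{\gg^{10}}\bigr)$; this invariant $(0,1)$-form vanishes on every $E_{-\a}$ but equals $-2\rho(\sigma(H_1))$ on $\bar Y=\sigma(H_1)\in\sigma(\gh_1)$, where $\rho=\tfrac12\sum_{\a\in R^+}\a$ is the half-sum of positive roots. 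Arranging the complex structure on $\gt$ so that $\rho$ does not vanish on $\gh_1$—possible since $\rho\neq 0$ in $\gh^*$—yields $\eta\neq 0$. In the trivialization $\Omega$, the Chern connection on $K_{\G_o}$ has an invariant connection $1$-form $\omega_K$ with $\omega_K^{(0,1)}=\eta\neq 0$; if $d\omega_K=\rho^{Ch}$ were zero, then $\omega_K$ would be a nonzero closed invariant $1$-form on the semisimple $\gg_o$, contradicting Whitehead's first lemma $H^1(\gg_o,\mathbb R)=0$.
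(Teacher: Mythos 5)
Your proposal is correct, but it reaches all three conclusions by a genuinely different route from the paper. The paper computes the Chern connection explicitly (proving $D_x=\ad(x)$ for $x\in\gh$) and derives the closed formula $\rho^{Ch}(x,y)=B([x,y],\delta)$ with $\delta=\sum_{\beta\in R^+}iH_\beta$; from this, $c_1(\M)=0$ is read off because $\rho^{Ch}=d\phi$ for the explicit invariant primitive $\phi=B(\cdot,\delta)$, the vanishing of $s^{Ch}$ follows by tracing and invoking the balanced equation \eqref{eq} on the roots, and $\kappa(\M)=-\infty$ is obtained by showing $K_{\M}^{\otimes m}$ is never holomorphically trivial (via averaging $\ln(||\Omega||^2)$ over $\M$) and then appealing to Yang's Theorem 1.4. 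You avoid all curvature computations: Whitehead's second lemma supplies the invariant primitive $\gamma$ abstractly, Stokes together with $d\omega^{n-1}=0$ gives $s^{Ch}=0$ for \emph{any} invariant balanced metric with no root combinatorics, and you prove $H^0(\M,K_{\M}^{\otimes m})=0$ directly via Lelong--Poincar\'e plus the maximum principle, so that $\kappa(\M)=-\infty$ needs no external theorem. The price of this softer approach is that you must separately establish $\rho^{Ch}\not\equiv 0$, which the paper gets for free from its explicit formula; your Whitehead-plus-$\eta$ argument for this is valid. Two small wrinkles in that last step: $\eta(\bar Y)$ should be the trace of $\pi^{1,0}\circ\ad(\bar Y)|_{\gg^{10}}$ rather than of $\ad(\bar Y)|_{\gg^{10}}$ (harmless on the elements where you evaluate it), and you should not ``arrange'' the complex structure on $\gt$ --- the theorem is asserted for a given standard $\J$ --- but in fact no arrangement is needed: if the half-sum of positive roots vanished on $\gh_1$ it would also vanish on $\sigma(\gh_1)$, since the two subspaces are exchanged by the conjugation fixing $\gt$ and the half-sum is purely imaginary on $\gt$, hence it would vanish on all of $\gh$, which is absurd; so $\eta\neq 0$ automatically and your contradiction stands.
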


We consider a standard complex structure $\J$ on a manifold $\M = \G_o/\Gamma$. We denote by $D$ the Chern connection relative to a Hermitian metric $h$ which is induced by an invariant metric on $\G_o$, again denoted by $h$. We can moreover suppose that $h$ is invariant by the right $\T$-translations. \par 
If $x\in\gg_o$ and if we still denote by $x$ the induced left-invariant vector field on $\G_o$, we consider $D_x\in\End(\gg_o)$ the endomorphism of $\gg_o$ which assigns to every $y\in\gg_o$ the element $D_xy$ corresponding to the left invariant vector field $D_xy$. Clearly $D_x\in \so(\gg_o,h)$ and $[D_x,\J]=0$. Moreover 
\beq\label{chern}D_xy = [x,y]^{10},\qquad \forall x\in \gg_o^{01},\ y\in \gg_o^{10},\eeq
that follows from the fact that $T^{1,1}=0$, where $T$ is torsion of $D$.\par 
If $R$ denote the curvature, where $R_{xy} = [D_x,D_y]- D_{[x,y]}$, we are interested in the first Ricci tensor $\rho$ given by
$$\rho(x,y) = -\frac 12{\rm{Tr}}(J\circ R_{xy}).$$
As the complex structure and the metric are both invariant under the adjoint action of the group $T = \exp(\gt)$, we see that 
$$\rho(\gt,E_\a) = 0,\ \forall \a\in R,$$
$$\rho(E_\a,E_\b)\neq 0\ {\rm{implies}}\ \b=-\a,\ \a,\b\in R.$$
Therefore we can compute 
$$\rho(E_\a,E_{-\a}) = \frac 12 {\rm{Tr}}(JD_{H_\a}).$$
\begin{lemma} For every $x\in \gh$
$$D_x = \ad(x).$$
\end{lemma}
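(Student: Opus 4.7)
The plan is to extend $D$ complex linearly to $\gg_o^c$ and verify the identity $D_x Y = [x,Y]$ for $x \in \gh$ by checking it on the natural decomposition $\gg_o^c = \gg_o^{10} \oplus \gg_o^{01}$. I would begin by writing $x = x^{10} + x^{01}$ with $x^{10} \in \gh_1$ and $x^{01} \in \sigma(\gh_1)$, and analyse the four combinations of type $(1,0)/(0,1)$ for the derivation direction and for the argument separately.

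Two of these four cases are essentially given. For $x^{01} \in \gg_o^{01}$ and $Y \in \gg_o^{10}$, formula \eqref{chern} yields $D_{x^{01}} Y = [x^{01},Y]^{10}$; applied to a root vector $Y = E_\alpha$ with $\alpha \in R^+$, the bracket $[x^{01},E_\alpha] = \alpha(x^{01}) E_\alpha$ is already of type $(1,0)$, so it agrees with $\ad(x^{01}) Y$. By complex conjugation, which is legitimate because $D$ is a real connection commuting with $\J$, one gets the symmetric statement for $x^{10}$ acting on $\gg_o^{01}$.

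For the two remaining "same type" cases, the tool is metric compatibility: for $Y \in \gg_o^{10}$ and an arbitrary test vector $Z \in \gg_o^{01}$, the function $h(Y,Z)$ is constant along left invariant directions, so $h(D_{x^{10}} Y, Z) = -h(Y, D_{x^{10}} Z)$. The right hand side is computable from the previous step, and using that $h$ is right $\T$-invariant (hence its complex bilinear extension is $\ad(\gh)$-invariant on $\gg_o^c$), the expression rearranges to $h([x^{10},Y], Z)$. Since $D_{x^{10}} Y$ already lies in $\gg_o^{10}$ by $\J$-compatibility, and the Hermitian pairing between $\gg_o^{10}$ and $\gg_o^{01}$ is non-degenerate, I would conclude $D_{x^{10}} Y = [x^{10},Y]$, and analogously for $x^{01}$ on $\gg_o^{01}$.

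Assembling the four contributions, for a root vector $Y = E_\alpha$ one obtains $D_x E_\alpha = \alpha(x) E_\alpha = \ad(x) E_\alpha$, while for $Y \in \gh$ every contribution vanishes because $\gh$ is abelian, again matching $\ad(x) Y = 0$. The main subtle point is the reduction of the "same type" cases: the $\ad(\gh)$-invariance of $h$, coming from its right $\T$-invariance, is the essential input without which this identification would not go through.
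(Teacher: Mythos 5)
Your argument is correct and follows essentially the same route as the paper: the mixed-type cases are read off from the torsion identity \eqref{chern} (noting that $[\gh,E_\alpha]$ is already of pure type), and the same-type cases are recovered from metric compatibility together with the $\ad(\gh)$-invariance of $h$ coming from right $\T$-invariance, plus non-degeneracy of the pairing between $\gg_o^{10}$ and $\gg_o^{01}$. The paper merely organizes it slightly differently, reducing by conjugation to $x\in\gh^{10}$ and testing against the root vectors $E_{-\beta}$ and $\gh^{01}$ explicitly, but the two key inputs you identify are exactly the ones used there.
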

\begin{proof} We use similar arguments as in \cite{Po}. It will suffice to consider the case where $x\in \gh^{10}$; then for every $\a\in R^+$ we have 
$$D_xE_{-\a} = [x,E_{-\a}]^{01} = [x,E_{-\a}],\quad D_x\gh^{01} = 0$$
by \eqref{chern}. Then if $\b\in R^+$ we have 
$$h(D_xE_\a,E_{-\b}) = - h(E_\a,D_xE_{-\b}) = -\b(x)h(E_\a,E_{-\b}) = 0 \qquad {\rm{if}}\ \a\neq\b,$$
so that $D_xE_\a=\a(x)E_\a = [x,E_\a]\ ({\rm{mod}}\ \gh)$. As $h(D_xE_\a,\gh^{01}) = -h(E_\a,D_x\gh^{01}) = 0$, we conclude that 
$$D_xE_\a = [x,E_\a].$$
Finally, $h(D_x\gh^{10},\gh^{01})=0$ and $h(D_x\gh^{10},E_{-\a}) = 
-h(\gh^{10},[x,E_{-\a}]) = 0$, so that $D_x\gh = 0 = [x,\gh]$.\end{proof}

It follows that 
$$\rho(\gh,\gh)=0$$ 
and 
$$\rho(E_\a,E_{-\a}) = \frac 12\left(2\sum_{\beta\in R^+}i\beta(H_\a)\right) = B(H_\a,\delta),$$
where 
$$\delta = \sum_{\beta\in R^+} iH_\b\in \gt .$$
This means that for every $\a,\b\in R$ we have
$$\rho(E_\a,E_\b) = - B(E_\a,[\d,E_\b]) = B([E_\a,E_\b],\d)$$
and therefore for every $x,y\in \gg_o$
\beq \label{rho}\rho(x,y) = B([x,y],\delta).\eeq
This means that $\rho = d\phi$, where $\phi$ is the left-invariant $1$-form that is given by $\phi(v) = B(v,\delta)$. Then clearly $c_1(M)=0$.\par
We now show that the tensor powers $K_{\M}^{\otimes k}$ are holomorphically non trivial for every $m\geq 1$. Indeed, the metric $h$ induces a Hermitian metric on the line bundles $K_{\M}^{\otimes m}$ with curvature form $m\rho$. If $\Omega$ is a nowhere vanishing holomorphic section of $K_{\M}^{\otimes k}$ , then $m\rho =  
-i\partial\overline\partial \ln(||\Omega||^2)$. 
If we denote by\, $\widehat{}$\,  the result of the symmetrization process, which commutes with the operators $\partial$ and $\overline\partial$, we obtain on $\G_o$ that $\widehat{\rho} =-i \partial\overline\partial\widehat{\ln(||\Omega||^2)} = 0$. As $\rho$ is invariant, $\widehat\rho=\rho=0$ and we get a contradiction as $\d\ne 0$.\par 
We now compute the Chern scalar curvature $s^{Ch}$ of the metric $h$ using formula \eqref{rho}. We use the orthonormal frame $e_1,\ldots,e_{2n}$. Then 
$$s^{Ch} = \sum_i \rho(Je_i,e_i) = 
-2\sum_{\a\in R^+} \frac{1}{h_\a^2} \rho(v_\a,w_\a) = $$
$$= 2iB\left(\sum_{\a\in R_\gp^+}\frac{1}{h_\a^2} H_\a-\sum_{\a\in R_\gk^+}\frac{1}{h_\a^2} H_\a,\d\right) = 0$$
if we consider the system of positive roots satisfying equation \eqref{eq}.
The claim $\kappa(\M)=-\infty$ now follows from 
Thm. 1.4 in \cite{Y}.\par

\begin{remark} Note that also for a compact group $\K$ endowed with an invariant complex structure we have $h^{n,0}(\K)=0$, see \cite{Pi}, Prop. 3.7.\par
We finally remark here that the balanced condition implies that the two scalar curvatures that one can obtain tracing the Chern curvature tensor coincide (see \cite{Ga3}, p. 501).\end{remark}

\vspace{2cm}

\end{document}